\documentclass[12pt,a4paper]{amsart}

\usepackage[T1]{fontenc}
\usepackage{lmodern}
\usepackage{a4wide,fullpage,setspace}
\usepackage{amsmath,amssymb,mathtools}
\usepackage[shortlabels]{enumitem}
\usepackage[numbers,sort&compress]{natbib}
\usepackage{tikz-cd}
\definecolor{dark-red}{rgb}{0.5,0.15,0.15}
\usepackage[colorlinks=true,linkcolor=black,citecolor=dark-red,urlcolor=dark-red]{hyperref}
\swapnumbers

\makeatletter
\let\P\@undefined
\let\leq\@undefined
\let\geq\@undefined
\let\vec\@undefined
\let\phi\@undefined
\let\epsilon\@undefined
\makeatother
\newcommand{\leq}{\leqslant}
\newcommand{\geq}{\geqslant}
\newcommand{\vec}{\overrightarrow}
\newcommand{\phi}{\varphi}
\newcommand{\epsilon}{\varepsilon}

\newcommand{\Top}{\mathop{\mathsf{Top}}}
\newcommand{\TOP}{\mathop{\mathsf{TOP}}}
\newcommand{\Pcat}{\mathcal P}
\newcommand{\Acat}{\mathcal A}
\newcommand{\G}{\mathcal G}
\newcommand{\M}{\mathcal M}
\newcommand{\One}{\mathbf 1}
\newcommand{\PSp}{[\Pcat^{\mathrm{op}},\Top]_0}
\newcommand{\PFlow}{\Pcat\mathop{\mathsf{Flow}}}
\newcommand{\Path}{\mathbb P}
\newcommand{\Ffree}{\mathbb F}
\newcommand{\id}{\operatorname{id}}
\newcommand{\Obj}{\operatorname{Obj}}
\newcommand{\Pmult}{\Pcat_{\times}}
\newcommand{\cocartesian}{\arrow[lu, phantom, "\ulcorner"{font=\Large}, pos=0]}

\newcommand{\liminj}{\varinjlim}

\newtheorem{thm}{Theorem}[section]
\newtheorem{lem}[thm]{Lemma}
\newtheorem{proposition}[thm]{Proposition}
\newtheorem{cor}[thm]{Corollary}
\theoremstyle{definition}
\newtheorem{definition}[thm]{Definition}

\title{Left properness of Moore flows}
\author[P. Gaucher]{Philippe Gaucher}
\address{Universit\'e Paris Cit\'e, CNRS, IRIF, F-75013, Paris, France}
\urladdr{\url{https://www.irif.fr/~gaucher}}
\subjclass[2020]{Primary 18N40; Secondary 55U35, 55U40, 18M05, 68Q85}
\keywords{Moore flow, flow, reparametrization category with cuts,
	directed homotopy theory, enriched semicategory, semimonoidal category,
	combinatorial model category, left proper model category, Reedy category}

\begin{document}

\begin{abstract}
We introduce the notion of a reparametrization category with cuts. For every such reparametrization category \(\Pcat\), we prove the tensor lemma, namely that the tensor product of two objectwise weak homotopy equivalences of \(\Pcat\)-spaces is a weak equivalence, and then the left properness of the q-model structure of \(\Pcat\)-flows.  Finally, we prove that the interval reparametrization categories \(\G\), \(\M\), as well as the final category \(\One\), have cuts.  The last example recovers the left properness of the q-model structure of ordinary flows.
\end{abstract}

\maketitle
\setcounter{tocdepth}{1}
\tableofcontents
\hypersetup{linkcolor = dark-red}

\section{Introduction}

\subsection*{Presentation}
Directed algebraic topology studies spaces in which motion is constrained by a preferred direction. In classical topology, a path may always be traversed backwards, and the fundamental groupoid records this reversibility. This is not appropriate for phenomena in which time, causality, or an order of execution cannot be reversed. In the most familiar path-based models, a directed space is a topological space together with specified directed paths, stable under concatenation and under nondecreasing changes of parameter; maps and homotopies are required to preserve that direction. A directed path from \(x\) to \(y\) need not therefore have a reverse from \(y\) to \(x\), and the fundamental invariant is naturally a category rather than a groupoid. Grandis develops this point of view, as well as several other models of non-reversible phenomena, in \cite[Introduction and Part~I]{Grandis}.

Concurrency is a guiding source of examples. A point of a state space represents a global configuration of a concurrent system, and a directed path represents a possible execution. Independent actions are represented by higher-dimensional cubes: different directed routes across the same cube encode different interleavings of actions that may occur concurrently. Forbidden regions can encode, for example, violations of mutual exclusion. One consequently studies directed paths and their directed deformations, rather than merely the underlying undirected space. Spaces of such paths can distinguish essential schedules and detect phenomena such as deadlocks, unreachable states, and serializability. The survey \cite{GoubaultUserGuide} and the article \cite{FajstrupRaussenGoubault} introduce these ideas through concrete concurrent systems; the monograph \cite{DAT_book} gives a broader account of directed topology and concurrency. Together with \cite{Grandis}, these references provide entry points to the subject that do not presuppose the formalism used in this paper.

The formalism of flows records the part of this geometry that is relevant to execution. An ordinary flow \(X\) consists of a set \(X^0\) of states and, for each ordered pair \((\alpha,\beta)\), a topological space \(\mathbb P_{\alpha,\beta}X\) of nonconstant execution paths from \(\alpha\) to \(\beta\), together with associative composition of composable paths. Identity paths are omitted, so a flow is a topologically enriched semicategory. This retains more than the reachability relation: the homotopy type of \(\mathbb P_{\alpha,\beta}X\) records continuous families of executions from \(\alpha\) to \(\beta\).

There is a point-set issue hidden in path composition. If every path is parametrized by \([0,1]\), concatenating two paths requires rescaling them onto the two halves of that interval, and repeated binary concatenation is not strictly associative. The Moore construction remedies this by retaining a positive length. A path of length \(a\) followed by one of length \(b\) is concatenated on an interval of length \(a+b\); addition of lengths and blockwise concatenation are strictly associative. A reparametrization category \(\Pcat\) organizes both the allowed lengths and the allowed changes of parameter. A \(\Pcat\)-space is an enriched presheaf on \(\Pcat\), and its tensor product \(D\otimes E\) represents a first segment in \(D\) followed by a second segment in \(E\), modulo the allowed reparametrizations. A \(\Pcat\)-flow, or Moore flow, is a semicategory enriched in these \(\Pcat\)-spaces. The categories \(\G\) and \(\M\) have the positive real numbers as objects and addition as tensor product on objects: \(\G\) uses increasing homeomorphisms of intervals, whereas \(\M\) uses nondecreasing continuous surjections and therefore also allows pauses. This construction and its q-model structure are developed in \cite[Sections~4--8]{Moore1}. Thus Moore flows make concatenation strictly associative while retaining the dependence of execution paths on duration and reparametrization.

This article is a companion to \cite{leftproperflow}. That paper proves left properness for ordinary flows, where the operation combining spaces of successive paths is the cartesian product. The present paper identifies the additional structure that permits the same strategy for Moore flows and replaces the binary product throughout the cellular argument by the tensor product of \(\Pcat\)-spaces. This replacement is not formal: the tensor is defined by a coend, and its homotopical behavior must first be established for arbitrary, not necessarily cofibrant, \(\Pcat\)-spaces. When \(\Pcat\) is the final category, its tensor product is the cartesian product, a \(\Pcat\)-flow is an ordinary flow, and the main theorem of the present paper specializes to \cite[Theorem~5.6]{leftproperflow}.

Recall that a model category is \emph{left proper} if the pushout of a weak equivalence along a cofibration is again a weak equivalence \cite[Section~1.1]{MR99h:55031}. This is the basic homotopy-invariance statement for gluing along cofibrations: replacing an object by a weakly equivalent one before attaching a cell does not change the weak homotopy type after the attachment. It is therefore important when ordinary pushouts and colimits are used to model their derived, or homotopy, counterparts; for background on homotopy colimits, see \cite[Chapters~5--6]{Riehl}. Left properness is also a standard hypothesis in existence theorems for left Bousfield localizations \cite[Chapter~4]{ref_model2}.

For Moore flows, however, weak equivalences are detected objectwise on the path \(\Pcat\)-spaces, whereas colimits of Moore flows are not computed objectwise on those path objects. The reason is fundamental to directed composition: \emph{colimits can create execution paths}. Attaching one globe not only adds the paths in the attached cell; closure under composition also adds all finite composites in which a new path is interspersed with old ones. Likewise, identifying two states can make formerly noncomposable paths composable and thereby create reduced words of arbitrary finite length. Consequently, left properness of spaces or even of \(\Pcat\)-spaces does not by itself imply left properness of \(\Pcat\)-flows. Section~\ref{sec:6} controls these new words by cubical and Reedy latching objects, iterated tensor pushout products, and transfinite homotopy-colimit arguments. The tensor lemma proved in Section~\ref{sec:5} is indispensable here: the factors representing old execution paths need not be cofibrant, so the proof requires the tensor product to preserve weak equivalences without any cofibrancy assumption.

\subsection*{Main results.}
Throughout, \(\Top\) is either the category of \(\Delta\)-generated spaces or the category of \(\Delta\)-Hausdorff \(\Delta\)-generated spaces. The main results are the following.
\begin{enumerate}
	\item
	A \emph{cut structure} on a reparametrization category \(\Pcat\) consists, in particular, of continuous block-equivariant operators that cut a map \(L\to a\otimes b\) into maps \(L\to a\) and \(L\to b\), together with a block-equivariant homotopy from the original map to the map reconstructed from its two cuts. For every reparametrization category with cuts, every pair of arbitrary \(\Pcat\)-spaces \(D,E\), and every object \(L\), Theorem~\ref{thm:common-comparison} constructs natural maps
	\[
	r_L:(D\otimes E)(L)\longrightarrow D(L)\times E(L),
	\qquad
	s_L: D(L)\times E(L)\longrightarrow(D\otimes E)(L),
	\]
	and proves that they are homotopy inverses. The maps and the homotopies are natural in \(D\) and \(E\), and no cofibrancy hypothesis is imposed.
	
	\item
	Theorem~\ref{thm:tensor}, the tensor lemma, states that if \(f: D\to D'\) and \(g: E\to E'\) are objectwise weak homotopy equivalences of arbitrary \(\Pcat\)-spaces, then
	\[
	f\otimes g: D\otimes E\longrightarrow D'\otimes E'
	\]
	is an objectwise weak homotopy equivalence.
	
	\item
	Theorem~\ref{thm:left-proper} proves that, for every reparametrization category \(\Pcat\) with cuts, the q-model structure on \(\Pcat\)-flows is left proper.
	
	\item
	Theorem~\ref{thm:GM-cuttable} proves that both interval reparametrization categories \(\G\) and \(\M\) have cuts. Corollary~\ref{cor:GM-left-proper} therefore gives the left properness of the q-model structures of \(\G\)-flows and \(\M\)-flows. Proposition~\ref{prop:one-cuttable} proves that the final category \(\mathbf 1\) has cuts, and Corollary~\ref{cor:one-left-proper} recovers the left properness of ordinary flows.
	
	\item
	Proposition~\ref{prop:GM-nonunital} proves that neither \(\G\) nor \(\M\) is unital; more strongly, there is no object \(e\) and no natural isomorphism \(e\otimes L\cong L\). Theorem~\ref{thm:example} then shows that cuts are compatible with symmetry and strict unitality in a genuinely nontrivial example: the one-object enriched category \(\Pcat_{\times}\) whose endomorphism space is \([0,1]\) and whose composition and tensor product are multiplication is a symmetric strictly unital reparametrization category with cuts, but is not even equivalent to the final category as an ordinary category. Theorem~\ref{thm:left-proper} therefore also applies to the q-model structure of \(\Pcat_{\times}\)-flows.
\end{enumerate}

\subsection*{Organization of the paper.}
Section~\ref{sec:2} fixes the two convenient categories of spaces and recalls the q- and h-model structures and relative-\(T_1\) inclusions. Section~\ref{sec:3} recalls reparametrization categories, \(\Pcat\)-spaces and their tensor product, Moore flows, and the q-model structure. Section~\ref{sec:4} introduces reparametrization categories with cuts. Section~\ref{sec:5} proves the comparison theorem and deduces the tensor lemma. Section~\ref{sec:6} develops the cellular machinery needed for colimits of Moore flows---tensor pushout products, the free-cell latching analysis, globe attachments, and state identifications---and proves left properness by transfinite induction and a retract argument. Section~\ref{sec:7} constructs cuts for \(\G\) and \(\M\). Section~\ref{sec:8} treats the final category and explains how the theorem for ordinary flows is recovered. Finally, Section~\ref{sec:9} proves that \(\G\) and \(\M\) are not unital and constructs the symmetric strictly unital non-final example \(\Pcat_{\times}\).

\subsection*{Warning}

This paper is not completely self-contained, since it builds on \cite{leftproperflow}. Much of the argument adapts that work by replacing topological spaces with \(\Pcat\)-spaces and the cartesian product with the tensor product of \(\Pcat\)-spaces. Nevertheless, apart from a few specific technical points, the present paper can be read independently of \cite{leftproperflow}.

\section{Topological and model-categorical conventions}
\label{sec:2}

Throughout the paper, \(\Top\) denotes either the category of \(\Delta\)-generated spaces or its full subcategory of \(\Delta\)-Hausdorff \(\Delta\)-generated spaces.  A \(\Delta\)-generated space is a space whose topology is final with respect to all continuous maps from the standard topological simplices.  A \(\Delta\)-generated space is \(\Delta\)-Hausdorff if the image of every continuous map \([0,1]\to X\) is closed.  These definitions and both choices of \(\Top\) are studied in \cite[Section~2]{Moore1} and \cite[Section~2 and Appendix~B]{leftproperflow}.  In either case \(\Top\) is complete, cocomplete, locally presentable, and cartesian closed.  Its internal mapping space \(\TOP(-,-)\) is obtained by \(\Delta\)-kelleyfying the compact-open topology \cite[Section~2, pp.~3--4]{Moore1}.

We use two model structures on \(\Top\).  In the q-model structure, the weak equivalences are the weak homotopy equivalences and the fibrations are the Serre fibrations.  Here a weak homotopy equivalence means a map inducing a bijection on path components and isomorphisms on all based homotopy groups in positive degrees. For either choice of \(\Top\), its canonical self-enrichment is topologically bicomplete, with tensor \(X\otimes K=X\times K\) and cotensor \(X^{K}=\TOP(K,X)\); compare \cite[Definitions~1.4--1.5]{SchwanzlVogt}.  Local presentability implies the monomorphism hypothesis by \cite[Remark~5.20]{Barthel-Riel}, and hence the h-model structure exists by \cite[Corollary~5.23]{Barthel-Riel}. Its weak equivalences are the homotopy equivalences, and its cofibrations, called h-cofibrations, are the strong cofibrations of \cite[Definition~5.3]{Barthel-Riel}. Every q-cofibration is an h-cofibration; see, e.g., \cite[Remark~2.1]{leftproperflow}.  

A one-to-one map \(i: A\to X\) is a \emph{relative-\(T_1\) inclusion} if, for every open \(U\subset A\) and every \(z\in X\setminus i(U)\), there is an open \(W\subset X\) such that \(i(U)\subset W\) and \(z\notin W\) \cite[p. 686]{hocolimfacile}.  Every h-cofibration is relative-\(T_1\) by \cite[Definition~2.2 and Proposition~2.6]{leftproperflow}.

A \textit{transfinite tower} \(X:\lambda\to\mathcal C\) is a functor
whose canonical lift
\(
\widetilde X:\lambda\longrightarrow (X(0)\downarrow\mathcal C)
\)
is colimit-preserving.

\section{Reparametrization categories and Moore flows}
\label{sec:3}

\begin{definition}[reparametrization category]\label{def:reparametrization}
A \emph{reparametrization category} \((\Pcat,\otimes)\) is a small \(\Top\)-enriched semimonoidal category satisfying the following axioms.
\begin{enumerate}
\item The semimonoidal structure is strict; in particular, \(\otimes\) induces a semigroup structure on \(\Obj(\Pcat)\).
\item Every enriched hom-space \(\Pcat(\ell,\ell')\) is contractible.
\item If \(\phi:\ell\to\ell'\) and \(\ell'=\ell'_1\otimes\ell'_2\), there are objects \(\ell_1,\ell_2\) and maps \(\phi_i:\ell_i\to\ell'_i\) such that \(\ell=\ell_1\otimes\ell_2\) and \(\phi=\phi_1\otimes\phi_2\).
\end{enumerate}
The continuous map 
\[
\otimes:\Pcat(\ell_1,\ell_2) \times \Pcat(\ell'_1,\ell'_2) \longrightarrow \Pcat(\ell_1\otimes \ell'_1,\ell_2\otimes \ell'_2)
\]
is sometimes called the \textit{block tensor} in this paper.
\end{definition}

This is the axiomatization of \cite[Definition~4.3 and Appendix~A]{Moore1}; it is also recalled in \cite[Definition~1]{Moore3}.  A \(\Pcat\)-space is an enriched functor \(D:\Pcat^{\mathrm{op}}\to\Top\).  Thus a map
\(\alpha: a\to a'\) induces a contravariant action \(D(\alpha): D(a')\to D(a)\), and the adjoint action map \(\Pcat(a,a')\times D(a')\to D(a)\) is continuous.  The free \(\Pcat\)-space generated by \(U\in\Top\) at \(\ell\in\Obj(\Pcat)\) is
\begin{equation}\label{eq:free}
 \Ffree_\ell U=\Pcat(-,\ell)\times U;
\end{equation}
see \cite[Notation~5.15]{Moore1}.

The category \(\PSp\) has the projective q-model structure.  Its weak equivalences and fibrations are objectwise, and a set of generating q-cofibrations is
\begin{equation}\label{eq:Pspace-generators}
 \Ffree_\ell \mathsf{S}^{n-1}\longrightarrow \Ffree_\ell \mathsf{D}^n
 \qquad(\ell\in\Obj(\Pcat),\ n\geq0),
\end{equation}
where \(\mathsf{S}^{-1}=\varnothing\) and \(\mathsf{D}^0=\{0\}\); see \cite[Theorem~6.2]{dgrtop} and \cite[Section~8]{Moore1}.  This projective model structure is proper: Definition~\ref{def:reparametrization}(2) makes \(\Pcat\) locally contractible in the terminology of \cite{dgrtop}, and properness follows from \cite[Corollary~6.5]{dgrtop}.

For two \(\Pcat\)-spaces, their semimonoidal product is
\begin{equation}\label{eq:coend}
 (D\otimes E)(L) =\int^{a,b\in\Obj(\Pcat)}
   \Pcat(L,a\otimes b)\times D(a)\times E(b).
\end{equation}
This formula and the fact that it yields a biclosed semimonoidal structure are \cite[Definition~5.7 and Theorem~5.14]{Moore1}.  Write \([\psi,x,y]\) for the coend class of \((\psi,x,y)\).  For maps \(\alpha: a\to a'\) and \(\beta: b\to b'\), the defining relation is
\begin{equation}\label{eq:relation}
 [\,(\alpha\otimes\beta)\psi,x',y'\,]
 =[\,\psi,D(\alpha)x',E(\beta)y'\,].
\end{equation}
This quotient description is \cite[Corollary~5.13]{Moore1} for \(n=2\).

\begin{definition}\label{def:Pflow}
A \textit{\(\Pcat\)-flow} is a small semicategory enriched in the biclosed semimonoidal category \((\PSp,\otimes)\).  Thus it consists of a set \(X^0\) of states, a \(\Pcat\)-space \(\Path_{\alpha,\beta}X\) for every ordered pair of states, and associative enriched composition maps
\[
 \Path_{\alpha,\beta}X\otimes\Path_{\beta,\gamma}X
 \longrightarrow\Path_{\alpha,\gamma}X.
\]
Its \textit{total path \(\Pcat\)-space} is \(\Path X=\coprod_{(\alpha,\beta)\in X^0\times X^0} \Path_{\alpha,\beta}X\). For a \(\Pcat\)-space \(D\), the \textit{globe} \(\operatorname{Glob}(D)\) has states \(0,1\), path object \(D\) from \(0\) to \(1\), all other path objects empty, and no nontrivial composition.
\end{definition}

These definitions are \cite[Definitions~6.1, 6.2 and~6.5]{Moore1}.  The q-model structure on \(\PFlow\) is cofibrantly generated.  A map \(f: X\to Y\) of \(\PFlow\) is a weak equivalence precisely when \(f^0\) is a bijection and every map
\[
 \Path_{\alpha,\beta}X\longrightarrow
 \Path_{f(\alpha),f(\beta)}Y
\]
is an objectwise weak homotopy equivalence.  Its generating q-cofibrations are
\begin{equation}\label{eq:Pflow-generators}
\begin{split}
 C&:\varnothing\longrightarrow\{0\},\\
 R&:\{0,1\}\longrightarrow\{0\},\\
 \operatorname{Glob}(\Ffree_\ell \mathsf{S}^{n-1})
 &\longrightarrow \operatorname{Glob}(\Ffree_\ell \mathsf{D}^n)
 \qquad(\ell\in\Obj(\Pcat),\ n\geq0).
\end{split}
\end{equation}
The \(\Pcat\)-flows in the first two lines have no paths.  These assertions are \cite[Notation~8.2, Theorems~8.8 and~8.16]{Moore1}.  The convention \(n=0\) includes \(\varnothing=\mathsf{S}^{-1}\to \mathsf{D}^0\).

\section{Reparametrization categories with cuts}
\label{sec:4}

\begin{definition}[reparametrization category with cuts]
\label{def:cuttable}
A \emph{cut structure} on a reparametri\-zation category \(\Pcat\) consists of the following data.
\begin{itemize}
\item For every object \(L\), a map
\[
 \delta_L: L\longrightarrow L\otimes L.
\]
\item For all \(L,a,b\in\Obj(\Pcat)\), continuous maps
\[
 c^{a,b}_1:\Pcat(L,a\otimes b)\longrightarrow\Pcat(L,a),
 \qquad
 c^{a,b}_2:\Pcat(L,a\otimes b)\longrightarrow\Pcat(L,b),
\]
called the \emph{cut operators}.  They are \textit{block-equivariant} in the following sense: for
\(\alpha: a\to a'\), \(\beta: b\to b'\), and
\(\psi: L\to a\otimes b\),
\begin{align}
 c^{a',b'}_1((\alpha\otimes\beta)\psi)
   &=\alpha c^{a,b}_1(\psi),                         \label{eq:cut-equiv1}\\
 c^{a',b'}_2((\alpha\otimes\beta)\psi)
   &=\beta c^{a,b}_2(\psi).                         \label{eq:cut-equiv2}
\end{align}
\item Put
\begin{equation}\label{eq:normalization}
 N^{a,b}_L(\psi)=
 \bigl(c^{a,b}_1(\psi)\otimes c^{a,b}_2(\psi)\bigr)\delta_L.
\end{equation}
There is a continuous homotopy
\[
 \Theta^{a,b}_L:
 [0,1]\times\Pcat(L,a\otimes b)\longrightarrow\Pcat(L,a\otimes b)
\]
from the identity to \(N^{a,b}_L\), and it is block-equivariant:
\begin{align}
 &\Theta^{a,b}_L(0,\psi)=\psi,\qquad
 \Theta^{a,b}_L(1,\psi)=N^{a,b}_L(\psi),
                                                        \label{eq:Theta-ends}\\
 &\Theta^{a',b'}_L(u,(\alpha\otimes\beta)\psi)
   =(\alpha\otimes\beta)\Theta^{a,b}_L(u,\psi).
                                                        \label{eq:Theta-equiv}
\end{align}
\end{itemize}
A \textit{reparametrization category with cuts} is a reparametrization category which admits a cut structure. We also say that a reparametrization category \textit{has cuts}.
\end{definition}

We prove that the interval reparametrization categories \(\G\) and \(\M\) have cuts in Theorem~\ref{thm:GM-cuttable}. We prove that the final category is a reparametrization category with cuts in Proposition~\ref{prop:one-cuttable}. We exhibit an example of a symmetric strictly unital reparametrization category with cuts which is not equivalent to the final category in Theorem~\ref{thm:example}. For completeness, we give an example of a reparametrization category without cuts.

\begin{proposition}\label{prop:Acat}
	Let \(\Acat\) have one object, hom-space \(\mathbb R\), composition and tensor both addition, and identity \(0\). The category \(\Acat\) is a symmetric strictly unital reparametrization category, and it has no cuts.
\end{proposition}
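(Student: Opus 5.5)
First I check that \(\Acat\) is a reparametrization category, then I show no cut structure exists. The single object is written \(*\). Composition and block tensor are both \(+\) on \(\mathbb R\), so the interchange law \((\alpha\otimes\beta)(\gamma\otimes\delta)=(\alpha\gamma)\otimes(\beta\delta)\) reads \((\alpha+\beta)+(\gamma+\delta)=(\alpha+\gamma)+(\beta+\delta)\), which holds by commutativity and associativity. The remaining structure is immediate:
\begin{itemize}
\item Associativity of \(\otimes\) on the single object is trivial, so the semimonoidal structure is strict.
\item The hom-space \(\mathbb R\) is contractible.
\item For axiom (3), given \(\phi\) we take \(\phi_1=\phi\) and \(\phi_2=0\).
\item Symmetry is the identity braiding \(0\), which is natural because addition is commutative.
\item Strict unitality holds with unit object \(*\), since \(0\otimes\phi=\phi\) and \(0\) is the identity.
\end{itemize}

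The nonexistence of cuts should come from block-equivariance alone. Suppose a cut structure exists, and set \(c_1=c_1^{*,*}:\mathbb R\to\mathbb R\). Equation \eqref{eq:cut-equiv1} says that for all \(\alpha,\beta,\psi\in\mathbb R\),
\[
c_1(\alpha+\beta+\psi)=\alpha+c_1(\psi).
\]
Take \(\psi=0\) and \(\alpha=0\). This gives \(c_1(\beta)=c_1(0)\) for every \(\beta\), so \(c_1\) is constant. Now take \(\alpha=1\) and \(\beta=-1\). This gives \(c_1(\psi)=1+c_1(\psi)\), which is a contradiction. Hence no cut operators exist at all, and the data \(\delta\) and \(\Theta\) are never needed.

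There is no real obstacle. The only point to watch is the direction of composition: \(\alpha c_1(\psi)\) is the composite, which here is \(\alpha+c_1(\psi)\). Since addition is commutative, the convention makes no difference. The key observation is that \(\alpha\otimes\beta\) depends only on \(\alpha+\beta\), while the cut must record \(\alpha\) separately.
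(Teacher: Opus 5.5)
Your proof is correct and follows the paper's argument. Both verify the reparametrization axioms by elementary identities for addition (with \(\phi=\phi\otimes 0\) for factorization) and derive the contradiction from block equivariance of \(c_1\) alone; your substitution \(\alpha=1,\ \beta=-1\) gives \(c_1(\psi)=1+c_1(\psi)\) directly, so your constancy step is superfluous, and the only point you omit, which the paper records, is that \(\mathbb R\) is \(\Delta\)-generated (indeed \(\Delta\)-Hausdorff) as a locally path-connected Hausdorff metrizable space, so that \(\Acat\) is genuinely \(\Top\)-enriched.
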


\begin{proof}
	The real line is a locally path-connected Hausdorff metrizable, hence \(\Delta\)-generated, space
	\cite[Proposition~3.11]{MR3270173}.  All structural assertions are the elementary identities for addition. The hom-space is contractible, and \(t=t\otimes0\) proves factorization. Suppose a first cut \(c_1:\mathbb R\to\mathbb R\) existed.  Block equivariance would give
	\[
	c_1(\alpha+\beta+\psi)=\alpha+c_1(\psi).
	\tag{12}
	\]
	Taking \(\alpha=\psi=0\) makes \(c_1\) constant, while taking \(\beta=\psi=0\) gives \(c_1(\alpha)=\alpha+c_1(0)\), a contradiction at \(\alpha=1\). 
\end{proof}

\section{The tensor lemma}
\label{sec:5}

Fix a reparametrization category \(\Pcat\) with cuts, two arbitrary \(\Pcat\)-spaces \(D,E\), and \(L\in\Obj(\Pcat)\).  Define
\begin{equation}\label{eq:common-r}
 r_L:(D\otimes E)(L)\longrightarrow D(L)\times E(L)
\end{equation}
by
\begin{equation}\label{eq:common-r-formula}
 r_L[\psi,x,y]=
 \bigl(D(c^{a,b}_1(\psi))x,E(c^{a,b}_2(\psi))y\bigr).
\end{equation}
Equations \eqref{eq:cut-equiv1}--\eqref{eq:cut-equiv2}, contravariant functoriality, and \eqref{eq:relation} show that this is independent of the representative.  Continuity follows from continuity of the cut operators and of the enriched actions, together with the quotient universal property of the coend. The formula also proves naturality in \(D\) and \(E\).

Using the \((a,b)=(L,L)\) coend summand, define
\begin{equation}\label{eq:common-s}
 s_L: D(L)\times E(L)\longrightarrow(D\otimes E)(L),\qquad
 s_L(x,y)=[\delta_L,x,y].
\end{equation}
It is continuous because the coprojection of a coend summand is continuous.

\begin{thm}[comparison theorem]\label{thm:common-comparison}
For every reparametrization category with cuts \(\Pcat\), arbitrary \(\Pcat\)-spaces \(D,E\), and every \(L\in\Obj(\Pcat)\), the maps \(r_L\) and \(s_L\) are homotopy inverses.  Thus, naturally in \(D\) and \(E\),
\[
 (D\otimes E)(L)\simeq D(L)\times E(L).
\]
No cofibrancy hypothesis is required.
\end{thm}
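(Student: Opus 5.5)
We compute the two composites separately. One composite is strictly the identity, and the other is homotopic to the identity by a homotopy built from \(\Theta\).

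\textbf{The composite \(r_L s_L\).} By \eqref{eq:common-r-formula},
\[
r_Ls_L(x,y)=\bigl(D(c^{L,L}_1(\delta_L))x,\;E(c^{L,L}_2(\delta_L))y\bigr).
\]
This need not be the identity on the nose. It is, however, homotopic to the identity. The hom-spaces \(\Pcat(L,L)\) are contractible by Definition~\ref{def:reparametrization}(2), so there is a path \(\gamma_i:[0,1]\to\Pcat(L,L)\) from \(c^{L,L}_i(\delta_L)\) to \(\id_L\). The formula \((u,x,y)\mapsto(D(\gamma_1(u))x,E(\gamma_2(u))y)\) is continuous because the adjoint action maps are continuous. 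It is a homotopy \(r_Ls_L\simeq\id\), and it is natural in \(D\) and \(E\) because the paths \(\gamma_i\) are independent of \(D\) and \(E\).

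\textbf{The composite \(s_L r_L\).} This is the substantive part. Compute
\[
s_Lr_L[\psi,x,y]=[\delta_L,D(c_1(\psi))x,E(c_2(\psi))y]=[(c_1(\psi)\otimes c_2(\psi))\delta_L,x,y]=[N^{a,b}_L(\psi),x,y],
\]
using relation \eqref{eq:relation} with \(\alpha=c_1(\psi):L\to a\) and \(\beta=c_2(\psi):L\to b\). Now define
\[
H(u,[\psi,x,y])=[\Theta^{a,b}_L(u,\psi),x,y].
\]
By \eqref{eq:Theta-ends}, \(H\) runs from the identity at \(u=0\) to \(s_Lr_L\) at \(u=1\).

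Two things must be checked. First, \(H\) is well defined on coend classes. Block-equivariance \eqref{eq:Theta-equiv} gives
\[
[\Theta(u,(\alpha\otimes\beta)\psi),x',y']=[(\alpha\otimes\beta)\Theta(u,\psi),x',y']=[\Theta(u,\psi),D(\alpha)x',E(\beta)y'],
\]
so \(H\) respects the generating relation. Second, \(H\) is continuous on \([0,1]\times(D\otimes E)(L)\).

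\textbf{Main obstacle: continuity of \(H\).} Continuity requires that \([0,1]\times(-)\) preserve the coend quotient. In \(\Top\) this holds because the category is cartesian closed, so \([0,1]\times-\) is a left adjoint and preserves colimits, coends included. Hence \([0,1]\times(D\otimes E)(L)\) is the coend of \([0,1]\times\Pcat(L,a\otimes b)\times D(a)\times E(b)\). On each summand, \(H\) is the continuous map \(\Theta\times\id\times\id\) followed by the coprojection, and these maps are compatible with the relations by the check above, so they induce a continuous map on the coend.

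\textbf{Naturality.} The homotopy \(H\) does not depend on \(D\) or \(E\) except through the coprojection, so it is natural in both. Together with the first composite, this gives the stated natural homotopy equivalence with no cofibrancy hypothesis.
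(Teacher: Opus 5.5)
Your proof is correct and follows the paper's argument step for step. A path in the contractible space \(\Pcat(L,L)\) from \(c^{L,L}_i(\delta_L)\) to \(\id_L\) gives \(r_Ls_L\simeq\id\), and the block-equivariant homotopy \(\Theta^{a,b}_L\), descended to the coend and continuous because \([0,1]\times-\) is a left adjoint, gives \(s_Lr_L\simeq\id\). The only blemish is your opening sentence claiming that one composite is strictly the identity; your own computation of \(r_Ls_L\) rightly contradicts it, so delete that sentence.
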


\begin{proof}
Put \(\lambda_i=c^{L,L}_i(\delta_L)\in\Pcat(L,L)\).  Then
\[
 r_Ls_L(x,y)=\bigl(D(\lambda_1)x,E(\lambda_2)y\bigr).
\]
The space \(\Pcat(L,L)\) is contractible by
Definition~\ref{def:reparametrization}(2), hence it is path connected.  To
spell this out, a contraction to a point gives paths from \(\lambda_i\) and
\(\id_L\) to that point; concatenating the first with the reverse of the
second gives a path \(\lambda_{i,u}\) from \(\lambda_i\) to \(\id_L\).  Applying
the continuous enriched actions of \(D\) and \(E\) to these paths yields
\(r_Ls_L\simeq\id\).

For \(\psi\in\Pcat(L,a\otimes b)\), relation \eqref{eq:relation}, applied in
the \((L,L)\) summand, gives
\begin{equation}\label{eq:common-sr}
 s_Lr_L[\psi,x,y]=[N^{a,b}_L(\psi),x,y].
\end{equation}
By \eqref{eq:Theta-equiv}, the maps
\[
 (u,[\psi,x,y])\longmapsto
 [\Theta^{a,b}_L(u,\psi),x,y]
\]
respect all coend relations.  They therefore descend to a homotopy on \((D\otimes E)(L)\).  The descent is continuous because \([0,1]\times-\) preserves coproducts and coequalizers: in the cartesian closed category \(\Top\), it is left adjoint to the internal hom functor \(\operatorname{TOP}([0,1],-)\).  By \eqref{eq:Theta-ends} and \eqref{eq:common-sr}, its endpoints are the identity and \(s_Lr_L\). Consequently \(s_Lr_L\simeq\id\).  

The homotopy from \(r_Ls_L\) to the identity is natural in \(D\) and \(E\) by naturality of the enriched actions.  The homotopy from \(s_Lr_L\) to the identity leaves \(x\) and \(y\) unchanged, and is therefore natural in \(D\) and \(E\) as well.
\end{proof}

\begin{thm}[tensor lemma]\label{thm:tensor}
Let \(\Pcat\) be a reparametrization category with cuts.  If \(f: D\to D'\) and \(g: E\to E'\) are objectwise weak homotopy equivalences of arbitrary \(\Pcat\)-spaces, then
\[
 f\otimes g: D\otimes E\longrightarrow D'\otimes E'
\]
is an objectwise weak homotopy equivalence.
\end{thm}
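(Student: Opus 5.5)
The plan is to deduce the statement from Theorem~\ref{thm:common-comparison} by a naturality square and two-out-of-three. Fix \(L\in\Obj(\Pcat)\). Since \(r_L\) is natural in both variables, the square
\[
\begin{tikzcd}
(D\otimes E)(L) \arrow[r,"r_L"] \arrow[d,"(f\otimes g)_L"'] & D(L)\times E(L) \arrow[d,"f_L\times g_L"]\\
(D'\otimes E')(L) \arrow[r,"r_L"'] & D'(L)\times E'(L)
\end{tikzcd}
\]
commutes. To check this, I would evaluate it on a representative \([\psi,x,y]\). Since \((f\otimes g)_L[\psi,x,y]=[\psi,f_a(x),g_b(y)]\), both composites send the class to
\[
\bigl(D'(c^{a,b}_1(\psi))f_a(x),\,E'(c^{a,b}_2(\psi))g_b(y)\bigr)=\bigl(f_L D(c^{a,b}_1(\psi))x,\,g_L E(c^{a,b}_2(\psi))y\bigr),
\]
because \(f\) and \(g\) are enriched natural transformations.

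The two horizontal maps are homotopy equivalences by Theorem~\ref{thm:common-comparison}, with homotopy inverse \(s_L\). This holds for arbitrary \(D,E,D',E'\), so no cofibrancy is needed. In particular, both horizontal maps are weak homotopy equivalences.

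The right vertical map \(f_L\times g_L\) is a product of two weak homotopy equivalences, hence a weak homotopy equivalence. The justification is that \(\pi_0\) commutes with finite products, and so does \(\pi_n\) at any basepoint \((x,y)\): the product \(X\times Y\) in \(\Top\) is the \(\Delta\)-kelleyfication of the usual product, but it has the same continuous maps from spheres and disks, since these are \(\Delta\)-generated. Therefore \(\pi_n(X\times Y,(x,y))\cong\pi_n(X,x)\times\pi_n(Y,y)\), naturally.

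By two-out-of-three, \(r_L\circ(f\otimes g)_L=(f_L\times g_L)\circ r_L\) is a weak equivalence, and since \(r_L\) is one, so is \((f\otimes g)_L\). As \(L\) was arbitrary, \(f\otimes g\) is an objectwise weak homotopy equivalence.

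The only point that needs care is the compatibility of products in \(\Top\) with homotopy groups. I expect no real obstacle there, given the remark above about \(\Delta\)-generated test spaces.
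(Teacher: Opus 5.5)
Your proposal is correct and follows the paper's proof essentially verbatim: the naturality square for \(r_L\) (you draw it transposed, and you additionally check its commutativity on representatives), the comparison theorem making both \(r_L\) maps homotopy equivalences, the product of weak homotopy equivalences being one, and two-out-of-three. Your remark about \(\Delta\)-kelleyfied products is harmless. It already follows from the universal property of the product in \(\Top\), since spheres and cylinders are objects of \(\Top\).
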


\begin{proof}
For every \(L\in\Obj(\Pcat)\), naturality of \(r_L\) gives a commutative square
\[
\begin{tikzcd}
 (D\otimes E)(L) \arrow[r,"(f\otimes g)_L"] \arrow[d,"r_L"']
   & (D'\otimes E')(L) \arrow[d,"r'_L"] \\
 D(L)\times E(L) \arrow[r,"f_L\times g_L"']
   & D'(L)\times E'(L)
\end{tikzcd}
\]
The vertical maps are homotopy equivalences by Theorem~\ref{thm:common-comparison}, hence weak homotopy equivalences.  The bottom map is a weak homotopy equivalence: path components of a finite product are products of path components, and maps and homotopies into a product give natural isomorphisms
\[
 \pi_n(X\times Y,(x,y))\cong
 \pi_n(X,x)\times\pi_n(Y,y)\qquad(n\geq1).
\]
The two-out-of-three property, applied to the square, proves that \((f\otimes g)_L\) is a weak homotopy equivalence.  Since \(L\) was arbitrary, the result is objectwise.
\end{proof}

\section{Left properness}
\label{sec:6}

For maps \(f: A\to B\) and \(g: C\to D\) of \(\Pcat\)-spaces, their tensor pushout product is
\[
 f\mathbin{\square_\otimes}g:
 (A\otimes D)\mathop{\coprod}_{A\otimes C}(B\otimes C)
 \longrightarrow B\otimes D.
\]
The notation \(\square_\times\) will mean the analogous pushout product in \((\Top,\times)\). Iterated pushout products are associated from the left; by associativity, any other parenthesization is canonically isomorphic to this one. This is \cite[Definition~9.1]{Moore1}.

\begin{proposition}[cubical formula for the iterated tensor pushout product]\label{prop:calculpushout} 
Let \[0\leq i\leq p,\quad f_i:A_i\longrightarrow B_i\] be \(p+1\) maps of \(\mathcal{P}\)-spaces. Let \(S\subset
\{0,\dots,p\}\). Let 
\[C_p(S):=C_0 \otimes \dots \otimes C_p \hbox{ with } \begin{cases}
	C_i = A_i & \hbox{ if }i\notin S\\
	C_i = B_i & \hbox{ if }i\in S.
\end{cases}\]
If \(S\) and \(T\) are two subsets of \(\{0,\dots,p\}\) such that \(S\subset
T\), let \[C_p(i_S^T):C_p(S)\longrightarrow C_p(T)\] be the morphism 
\[g_0 \otimes \dots \otimes g_p \hbox{ with } \begin{cases}
	g_i = \id_{B_i} & \hbox{ if }i\in S\\
	g_i = f_i & \hbox{ if }i\in T\backslash S\\
	g_i = \id_{A_i} & \hbox{ if }i\notin T.
\end{cases}\]
Then:  
\begin{enumerate} 
	\item the assignments \(S\mapsto C_p(S)\) and \(i_S^T\mapsto C_p(i_S^T)\) define a functor
	\[
	\Pcat(\{0,\dots,p\})\longrightarrow
	\PSp.
	\]
	\item there exists a canonical morphism 
	\[\liminj_{S\subsetneqq \{0,\dots,p\}} 
	C_p(S)\longrightarrow C_p(\{0,\dots,p\}).\]
	and it is equal to the morphism \(f_0\square_\otimes\dots \square_\otimes f_p\). 
\end{enumerate}
\end{proposition}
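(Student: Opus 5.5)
Part (1) follows from bifunctoriality of \(\otimes\) on \(\PSp\). The tensor product \(\otimes:\PSp\times\PSp\to\PSp\) is a functor, being defined by the coend \eqref{eq:coend}. Iterated tensors \(C_0\otimes\dots\otimes C_p\) are well defined up to canonical isomorphism by the strict associativity of the biclosed semimonoidal structure \cite[Theorem~5.14]{Moore1}. Hence \(C_p(i_S^T)\) is a tensor product of morphisms.

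Three checks then remain. First, \(C_p(i_S^S)=\id\), which holds because each factor is an identity. Second, for \(S\subset T\subset U\) we need \(C_p(i_T^U)C_p(i_S^T)=C_p(i_S^U)\). This is checked factor by factor: in position \(i\) the composite is \(\id\circ\id\), \(\id\circ f_i\), \(f_i\circ\id\) or \(\id\circ\id\), according to where \(i\) lies in \(S\subset T\subset U\), and each agrees with the \(i\)-th factor of \(C_p(i_S^U)\). Third, functoriality of the \((p+1)\)-fold tensor gives the interchange law that the composite of tensors is the tensor of composites.

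For (2), the canonical morphism is induced by the cocone \(C_p(i_S^{\{0,\dots,p\}})\), \(S\subsetneqq\{0,\dots,p\}\). This is a cocone by (1). To identify it with \(f_0\square_\otimes\dots\square_\otimes f_p\), I would argue by induction on \(p\). The case \(p=0\) is trivial: the colimit over \(\{\varnothing\}\) is \(A_0\), and the map is \(f_0\). For the case \(p=1\), the punctured square \(\{\varnothing,\{0\},\{1\}\}\) is a span, and its colimit is exactly the pushout in the definition of \(\square_\otimes\).

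For the inductive step, write \(g=f_0\square_\otimes\dots\square_\otimes f_{p-1}:P\to Q\), where by induction \(P=\liminj_{S\subsetneqq\{0,\dots,p-1\}}C_{p-1}(S)\) and \(Q=B_0\otimes\dots\otimes B_{p-1}\). By definition, \(g\square_\otimes f_p\) has source \((P\otimes B_p)\coprod_{P\otimes A_p}(Q\otimes A_p)\). The key input is that \(-\otimes Z\) preserves colimits, because the structure is biclosed. Hence
\[P\otimes Z\cong\liminj_{S\subsetneqq\{0,\dots,p-1\}}C_{p-1}(S)\otimes Z.\]
The poset of proper subsets of \(\{0,\dots,p\}\) decomposes into three pieces:
\[\{S:p\notin S\}\cong\Pcat(\{0,\dots,p-1\}),\qquad \{S\cup\{p\}:S\subsetneqq\{0,\dots,p-1\}\},\]
together with the two pieces glued along the proper subsets not containing \(p\). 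A standard cofinality/Fubini argument for colimits over this poset, as in the cartesian case \cite{leftproperflow}, identifies \(\liminj_{S\subsetneqq\{0,\dots,p\}}C_p(S)\) with the pushout of
\[Q\otimes A_p\longleftarrow P\otimes A_p\longrightarrow P\otimes B_p.\]
This uses two facts: \(Q\otimes A_p\) is the value at the terminal object \(\{0,\dots,p-1\}\) of the subposet \(\{S:p\notin S\}\), and the other two terms are colimits over proper subsets. Finally, the induced map to \(B_0\otimes\dots\otimes B_p\) agrees with \(g\square_\otimes f_p\) on each summand by construction.

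The main obstacle is this combinatorial colimit decomposition, together with keeping track of the associativity isomorphisms so that the left-associated parenthesization matches. I would handle it by checking the universal property directly: a cocone on the punctured cube is the same thing as a compatible pair consisting of a cocone on the face \(p\notin S\) and a cocone on the face \(p\in S\), agreeing on their intersection.
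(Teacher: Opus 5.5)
Your proposal is correct. The paper gives no argument in the text and simply cites \cite[Proposition~9.8]{Moore1}, and your induction on \(p\) is a complete proof of that statement: you split the punctured cube into the face \(p\notin S\), whose colimit is its terminal vertex \(Q\otimes A_p\), and the face \(p\in S\), whose colimit is \(P\otimes B_p\), glued along \(P\otimes A_p\) because mixed arrows \(S\to T\) factor through \(S\cup\{p\}\) and \(-\otimes Z\) preserves colimits.

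The only slip is calling the tensor of \(\Pcat\)-spaces strictly associative; it is associative only up to canonical isomorphism, as the paper says. This costs nothing: if you fix left-associated words throughout, \(C_p(S)\) equals \(C_{p-1}(S\setminus\{p\})\otimes A_p\) or \(C_{p-1}(S\setminus\{p\})\otimes B_p\) on the nose. So no associators enter the induction step you flagged as the main obstacle.
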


\begin{proof}
	This is \cite[Proposition~9.8]{Moore1}.
\end{proof}

Proposition~\ref{prop:calculpushout} is called the \emph{cubical formula} for the iterated tensor pushout product because the simplices of the order complex of the chain \(\{0<\cdots<p\}\), including the empty simplex, form a poset naturally isomorphic to the Boolean lattice \(\{0<1\}^{p+1}\): a simplex is sent to its characteristic function. The objects and maps appearing in Proposition~\ref{prop:calculpushout} therefore assemble into a diagram of \(\Pcat\)-spaces
\[
D: \{0<1\}^{p+1}\longrightarrow \PSp,
\qquad
D(\epsilon_0,\ldots,\epsilon_p)
=
C_p\bigl(\{i\mid \epsilon_i=1\}\bigr).
\]
The indexing poset is a direct Reedy category with degree function
\[
d(\epsilon_0,\ldots,\epsilon_p)=\sum_{i=0}^{p}\epsilon_i.
\]
With respect to this Reedy structure, the iterated tensor pushout product
\[
f_0\square_\otimes\cdots\square_\otimes f_p
\]
is precisely the latching map of \(D\) at the terminal vertex
\((1,\ldots,1)\):
\[
L_{(1,\ldots,1)}D
=
\liminj_{
	\partial\bigl(
	\{0<1\}^{p+1}\mathbin{\downarrow}(1,\ldots,1)
	\bigr)
}D
\longrightarrow
D(1,\ldots,1).
\]

\begin{lem}\label{lem:finite-tensor-coproduct}
Nonempty finite tensor products and arbitrary coproducts of objectwise weak homotopy equivalences of \(\Pcat\)-spaces are objectwise weak homotopy equivalences.
\end{lem}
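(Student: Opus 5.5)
The lemma has two independent parts, and I will treat them separately.

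\emph{Finite tensor products.} Let \(f_i: D_i\to D'_i\) (\(1\leq i\leq n\), \(n\geq1\)) be objectwise weak homotopy equivalences. I argue by induction on \(n\); the case \(n=1\) is trivial. For the inductive step, the semimonoidal structure is strict, so
\[
f_1\otimes\cdots\otimes f_n=(f_1\otimes\cdots\otimes f_{n-1})\otimes f_n .
\]
The map \(f_1\otimes\cdots\otimes f_{n-1}\) is an objectwise weak homotopy equivalence by the induction hypothesis. Theorem~\ref{thm:tensor} then applies directly to the pair \((f_1\otimes\cdots\otimes f_{n-1}, f_n)\). This uses the fact that Theorem~\ref{thm:tensor} needs no cofibrancy, which matters because the intermediate tensor products need not be cofibrant.

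Alternatively, one can iterate the comparison of Theorem~\ref{thm:common-comparison} to obtain a natural homotopy equivalence
\[
(D_1\otimes\cdots\otimes D_n)(L)\simeq D_1(L)\times\cdots\times D_n(L),
\]
and finish by two-out-of-three. The induction is shorter, so I will use it.

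\emph{Arbitrary coproducts.} Colimits of \(\Pcat\)-spaces are computed objectwise, so \(\bigl(\coprod_j f_j\bigr)_L=\coprod_j (f_j)_L\) in \(\Top\). It then suffices to show that a coproduct of weak homotopy equivalences of spaces is a weak homotopy equivalence. I will check this in two steps.

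\begin{itemize}
\item Path components of a coproduct are the disjoint union of the path components of the summands, because every map from \([0,1]\) into \(\coprod_j X_j\) lands in a single summand, \([0,1]\) being connected. So the map is bijective on \(\pi_0\).
\item For a basepoint in the summand \(X_j\), every map \(\mathsf{S}^n\to\coprod_j X_j\) and every homotopy \(\mathsf{S}^n\times[0,1]\to\coprod_j X_j\) factors through \(X_j\), by connectedness. Hence \(\pi_n\bigl(\coprod_j X_j,x\bigr)\cong\pi_n(X_j,x)\) naturally, and the map is an isomorphism on all \(\pi_n\).
\end{itemize}

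The only point needing care is that summands of a coproduct in \(\Top\) (\(\Delta\)-generated, possibly \(\Delta\)-Hausdorff) are still open and closed, so that connectedness arguments apply. This holds because the coproduct in both categories is the topological disjoint union: \(\Delta\)-kelleyfication does not change it, and a disjoint union of \(\Delta\)-Hausdorff spaces is \(\Delta\)-Hausdorff.

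I expect no real obstacle; the substantive input is Theorem~\ref{thm:tensor}.
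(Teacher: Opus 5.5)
Your proof is correct and is essentially the paper's: induction from Theorem~\ref{thm:tensor} for tensor products, and, for coproducts, evaluation at each $L$ followed by the componentwise description of $\pi_0$ and of based $\pi_n$ of a disjoint union. One small nit: the strictness in Definition~\ref{def:reparametrization}(1) concerns $\Pcat$ itself, not the coend tensor on $\PSp$, whose associativity is only a canonical natural isomorphism; this is harmless, since being an objectwise weak homotopy equivalence is invariant under isomorphism in the arrow category.
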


\begin{proof}
The assertion about finite tensor products follows by induction from Theorem~\ref{thm:tensor}; the case of one factor is tautological.  For coproducts, fix \(L\in\Obj(\Pcat)\).  A coproduct of spaces is a disjoint union. Its set of path components is the disjoint union of the sets of path components of its summands.  Moreover, a based map from the connected sphere \(\mathsf{S}^k\) (\(k\geq1\)), and every based homotopy between such maps, lies in the summand containing the basepoint.  Hence the homotopy groups of a coproduct, at a chosen basepoint, are the homotopy groups of that summand. The claimed assertion follows directly from the definition of weak homotopy equivalence.
\end{proof}

\begin{lem}[well known] \label{lem:h-cartesian-monoidal}
	The h-model structure on \(\Top\) is cartesian monoidal.
\end{lem}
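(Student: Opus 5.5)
To prove that the h-model structure on \(\Top\) is cartesian monoidal, I will verify the two axioms of a monoidal model category for \((\Top,\times)\): the pushout product axiom and the unit axiom. The unit axiom is immediate, because the unit \(\{0\}\) is cofibrant; every space is cofibrant in the h-model structure, since \(\varnothing\to X\) is a strong cofibration. The work is in the pushout product axiom. If \(i:A\to X\) and \(j:B\to Y\) are h-cofibrations, I must show that
\[
i\mathbin{\square_\times}j:(X\times B)\mathop{\coprod}_{A\times B}(A\times Y)\longrightarrow X\times Y
\]
is an h-cofibration, and that it is moreover a homotopy equivalence whenever \(i\) or \(j\) is.

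For the cofibration part I will use the description of strong cofibrations from \cite{Barthel-Riel}. These are the closed Hurewicz cofibrations; in the \(\Delta\)-generated setting the closedness requirement is replaced by the appropriate \(\Delta\)-version, so that in practice they are maps having the homotopy extension property. The natural tool is Strøm's characterization of Hurewicz cofibrations by Strøm pairs \((u,H)\), where \(u:X\to[0,1]\) satisfies \(u^{-1}(0)=A\) and \(H\) is a deformation. Given pairs \((u,H)\) for \(i\) and \((v,K)\) for \(j\), I will take \(w=\min(u,v)\) on \(X\times Y\) and build the standard combined deformation, exactly as in Strøm's product theorem. This exhibits \(i\mathbin{\square_\times}j\) as a Hurewicz cofibration. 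The identification of the pushout with the subspace \(X\times B\cup A\times Y\) rests on closedness, and this is where the relative-\(T_1\) property recalled in Section~\ref{sec:2} may enter. Cartesian closedness of \(\Top\) ensures that \([0,1]\times-\) commutes with the pushout, so the homotopies descend continuously.

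For the acyclic part, suppose \(i\) is in addition a homotopy equivalence, so that it is a strong deformation retract inclusion. Then \(i\times\id_B\) and \(i\times\id_Y\) are also strong deformation retract inclusions. By the gluing property, the pushout of \(i\times\id_B\) along \(A\times B\to A\times Y\) is a homotopy equivalence onto the pushout. Two-out-of-three applied to \(A\times Y\to P\to X\times Y\) then shows that \(i\mathbin{\square_\times}j\) is a homotopy equivalence. Since h-cofibrations are left proper and gluing works for homotopy equivalences, this settles the acyclic case.

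The main obstacle is the point-set issue: Strøm's theorem is classically stated for closed cofibrations in compactly generated or general spaces. I expect the cleanest route is to cite the published verification that the h-model structure on \(\Delta\)-generated spaces is monoidal, for instance \cite{Barthel-Riel} or \cite[Appendix~B]{leftproperflow}, rather than reprove it. Otherwise I would adapt Strøm's argument, checking that \(\Delta\)-kelleyfication preserves the required functions and homotopies.
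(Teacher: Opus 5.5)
You handle the unit axiom as the paper does. Your acyclic half is correct, and it is more elementary than the paper's appeal to \cite[Addendum~3.6(1)]{SchwanzlVogt}. A trivial h-cofibration admits a strong deformation retraction. Since \([0,1]\times-\) preserves the pushout, \(A\times Y\to P\) is a strong deformation retract. Two-out-of-three applied to \(i\times\id_Y\) then finishes, and this step does not even use that \(j\) is a cofibration.

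The gap is the cofibration half, which your proposal does not establish.
\begin{enumerate}
\item The h-cofibrations are not ``in practice'' the maps with the homotopy extension property. They are the maps with the left lifting property against h-fibrations that are homotopy equivalences, which are Schw\"anzl--Vogt's strong cofibrations by \cite[Proposition~3.5(6)]{SchwanzlVogt}. Exhibiting a homotopy extension for \(i\mathbin{\square_\times}j\) would therefore not suffice.
\item The Str\o m-pair route needs two facts inside \(\Topt\) that you do not supply.
\begin{itemize}
\item You need a characterization of h-cofibrations by Str\o m structures \((u,H)\) with \(A=u^{-1}(0)\), in both directions.
\item You need to identify the pushout \((X\times B)\amalg_{A\times B}(A\times Y)\) with the union \(X\times B\cup A\times Y\). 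This is not automatic here. The pieces carry \(\Delta\)-kelleyfied topologies, and closed subspaces of \(\Delta\)-generated spaces need not be \(\Delta\)-generated, so the classical pasting argument does not transfer verbatim. The relative-\(T_1\) remark does not provide it.
\end{itemize}
\item Your fallback of citing \cite{Barthel-Riel} or \cite[Appendix~B]{leftproperflow} is not the input the paper uses. Those sources serve only to construct and describe the h-model structure and the ambient category.
\end{enumerate}

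The missing idea is to stay in the abstract topologically bicomplete setting. The inclusion \(\{0,1\}\subset[0,1]\) is an h-cofibration, so the interval is a good cylinder. The h-cofibrations coincide with the strong cofibrations. Then \cite[Pairing Theorem~2.7(1)]{SchwanzlVogt}, applied to the closed two-variable adjunction given by the cartesian product, shows directly that \(i\mathbin{\square_\times}j\) is a strong cofibration. This needs no Str\o m function and no identification of the pushout as a subspace.
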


\begin{proof}
	The usual interval is a good cylinder object since the inclusion map \(\{0,1\}\subset [0,1]\) is an h-cofibration, being a q-cofibration; see \cite[Example~1.11(A)(1) and Sections~4.1 and~4.3]{SchwanzlVogt}. By \cite[Definition~5.3 and Corollary~5.23]{Barthel-Riel}, the cofibrations of the h-model structure are the maps having the left lifting property with respect to the h-fibrations that are
	homotopy equivalences.  By \cite[Proposition~3.5(6)]{SchwanzlVogt}, these are precisely the strong cofibrations of \cite{SchwanzlVogt}.
	
	Let \(i: A\to B\) and \(j: C\to D\) be \(h\)-cofibrations. Applied to the closed two-variable adjunction determined by the cartesian product, \cite[Pairing Theorem~2.7(1)]{SchwanzlVogt} shows that
	\[
	i\square_{\times}j:
	(A\times D)\amalg_{A\times C}(B\times C)\longrightarrow B\times D
	\]
	is a strong cofibration.  If \(i\) or \(j\) is trivial, then it is a homotopy equivalence, and \cite[Addendum~3.6(1)]{SchwanzlVogt} shows that \(i\square_{\times}j\) is a homotopy equivalence.  Thus the cartesian product satisfies the pushout-product axiom; equivalently, it is a Quillen bifunctor by \cite[Lemma~4.2.2]{MR99h:55031}.
	
	Finally, every object is strongly cofibrant by \cite[Lemma~2.6(3)]{SchwanzlVogt}.  In particular, the cartesian unit \(\{*\}\) is cofibrant, so the unit axiom is automatic: one may take \(\id_{\{*\}}\) as its cofibrant replacement; see \cite[Definition~4.2.6]{MR99h:55031}.  Therefore the h-model structure on \(\Top\) is cartesian monoidal.
\end{proof}

\begin{lem}[well known]\label{lem:product-hcof}
If \(i: A\to B\) is an h-cofibration and \(K\) is any space, then \(K\times i: K\times A\to K\times B\) is an h-cofibration.  Pushouts and transfinite composites of h-cofibrations are h-cofibrations.
\end{lem}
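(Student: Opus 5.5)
The three assertions are standard consequences of the characterization of h-cofibrations recalled in Lemma~\ref{lem:h-cartesian-monoidal}: by \cite[Definition~5.3 and Corollary~5.23]{Barthel-Riel} and \cite[Proposition~3.5(6)]{SchwanzlVogt}, the h-cofibrations are exactly the maps having the left lifting property with respect to the trivial h-fibrations. The plan is to deduce each assertion from this characterization, or from the pushout-product axiom already established.

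For \(K\times i\), I would use Lemma~\ref{lem:h-cartesian-monoidal}. Every object of \(\Top\) is h-cofibrant, so \(\varnothing\to K\) is an h-cofibration. Its pushout product with \(i\) is
\[
(\varnothing\times B)\amalg_{\varnothing\times A}(K\times A)\cong K\times A\longrightarrow K\times B,
\]
which is exactly \(K\times i\). The pushout-product axiom, that is, \cite[Pairing Theorem~2.7(1)]{SchwanzlVogt}, then shows that it is an h-cofibration. Alternatively, one can argue by adjunction. A lifting problem of \(K\times i\) against a trivial h-fibration \(p:X\to Y\) corresponds to a lifting problem of \(i\) against \(\TOP(K,p)\). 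The map \(\TOP(K,p)\) is again an h-fibration, since h-fibrations are defined by the homotopy lifting property and \(\TOP(K,-)\) commutes with cotensoring by \([0,1]\). It is also a homotopy equivalence, because \(\TOP(K,-)\) preserves homotopies.

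For pushouts and transfinite composites, I would use the standard fact that any class of maps defined by a left lifting property is closed under cobase change and transfinite composition. For a pushout, a lift for the original map, composed with the induced map out of the pushout, gives a lift for the cobase change by the universal property. For a transfinite composite, lifts are built inductively along the tower, using the colimit at limit ordinals.

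I expect no real obstacle. The only care needed is to make sure the transfinite composite is taken in the sense of the paper's definition of transfinite tower (colimit-preserving), so that the inductive lifts glue at limit ordinals.
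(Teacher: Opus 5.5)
Your proof is correct and matches the paper's argument: the paper also writes \(K\times i\) as the pushout product \((\varnothing\to K)\square_\times i\) and applies Lemma~\ref{lem:h-cartesian-monoidal} using that every space is h-cofibrant. For closure under pushouts and transfinite composites, the paper appeals to the general properties of cofibrations in a model category, which is exactly your left-lifting-property argument; your alternative adjunction argument via \(\TOP(K,p)\) is valid but not needed.
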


\begin{proof}
The map \(K\times i\) is equal to the pushout product 
\(
(\varnothing\to K) \square_\times i
\). 
Since every space is h-cofibrant by \cite[Lemma~2.6(3)]{SchwanzlVogt}, Lemma~\ref{lem:h-cartesian-monoidal} implies that this map is an h-cofibration. The last assertion is a general feature of the class of cofibrations of a model category, here the h-model category of spaces. 
\end{proof}

The next lemma is the point-set substitute for the assertion, used for ordinary flows, that a product pushout product with arbitrary ordinary path spaces is an h-cofibration.

\begin{lem}[free-cell latching lemma]\label{lem:free-latching}
	Let \(p,k\geq0\) with \(p+k>0\), and let \(W_1,\dots,W_p\) be arbitrary \(\Pcat\)-spaces.  For \(1\leq r\leq k\), choose \(\ell_r\in\Obj(\Pcat)\) and \(n_r\geq0\), and let
	\[
	j_r=\Ffree_{\ell_r}
	(\mathsf{S}^{n_r-1}\hookrightarrow \mathsf{D}^{n_r})
	\]
	be a generating projective q-cofibration of \(\Pcat\)-spaces.  Every evaluation at \(L\in\Obj(\Pcat)\) of an iterated tensor pushout product made, in any fixed order, from the maps \(\varnothing\to W_i\) and the maps \(j_r\) is an h-cofibration.  The same conclusion holds if any of the maps \(j_r\) is replaced by a pushout of \(j_r\) in \(\PSp\).
\end{lem}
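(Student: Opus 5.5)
The plan is to compute the iterated tensor pushout product explicitly, using the fact that the \(j_r\) are free. Everything then reduces to a cartesian pushout product of sphere–disk inclusions, multiplied by a fixed space. The basic observation is that \(\varnothing\to W\) is the initial map. Tensoring is a left adjoint in each variable, since the structure is biclosed. Hence \((\varnothing\to W)\square_\otimes f\cong W\otimes f\) and \(f\square_\otimes(\varnothing\to W)\cong f\otimes W\). Inserting a factor \(\varnothing\to W_i\) at any position of the iterated pushout product therefore amounts to tensoring by \(W_i\) at that position. This is easiest to see from the cubical formula, Proposition~\ref{prop:calculpushout}: every vertex of the cube with \(i\notin S\) for a \(W\)-factor is \(\varnothing\), so only the subcube where all \(W\)-coordinates equal \(1\) contributes.

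First treat the case \(k=0\), so \(p>0\). Here the map is \(\varnothing\to W_1\otimes\cdots\otimes W_p\) up to the chosen order. Its evaluation at \(L\) is \(\varnothing\to(W_1\otimes\cdots\otimes W_p)(L)\), which is an h-cofibration because every space is h-cofibrant \cite[Lemma~2.6(3)]{SchwanzlVogt}.

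Now let \(k>0\). Fix the order of the factors and write the full tensor product of the cube vertex as \(C_1\otimes\cdots\otimes C_{p+k}\), where each free factor is \(\Ffree_{\ell_r}U_r\) with \(U_r\in\{\mathsf S^{n_r-1},\mathsf D^{n_r}\}\). Using the coend formula \eqref{eq:coend} iterated (the \(n\)-fold version \cite[Corollary~5.13]{Moore1}) and the enriched Yoneda lemma \(\int^a\Pcat(L,a\otimes\cdots)\times\Pcat(a,\ell)\cong\Pcat(L,\ell\otimes\cdots)\), I expect a natural isomorphism
\[
(C_1\otimes\cdots\otimes C_{p+k})(L)\cong K(L)\times U_1\times\cdots\times U_k ,
\]
where \(K(L)=\int^{b_1,\dots,b_p}\Pcat(L,x_1\otimes\cdots\otimes x_{p+k})\times\prod_i W_i(b_i)\). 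Here \(x_m\) is \(\ell_r\) at the position of \(j_r\) and \(b_i\) at the position of \(W_i\); when \(p=0\), \(K(L)=\Pcat(L,\ell_1\otimes\cdots\otimes\ell_k)\). This isomorphism is natural in the \(U_r\). Evaluation at \(L\) and \(K(L)\times-\) both commute with colimits, the latter because \(\Top\) is cartesian closed. Applying this to the punctured cube shows that the evaluation at \(L\) of the iterated tensor pushout product is isomorphic to
\[
K(L)\times\bigl(i_1\square_\times\cdots\square_\times i_k\bigr),\qquad i_r:\mathsf S^{n_r-1}\hookrightarrow\mathsf D^{n_r}.
\]
The iterated cartesian pushout product of the \(i_r\) is homeomorphic to a sphere–disk inclusion \(\partial\mathsf D^{N}\hookrightarrow\mathsf D^{N}\), or to \(\varnothing\to\) a point. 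It is therefore a q-cofibration, hence an h-cofibration. By Lemma~\ref{lem:product-hcof}, \(K(L)\times(-)\) preserves h-cofibrations, and the claim follows.

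For the final assertion, suppose \(j'_r\) is a pushout of \(j_r\) along some map of \(\Pcat\)-spaces. Pushout products with fixed other factors are left adjoint-like in each variable, because \(\otimes\) preserves colimits separately in each variable. Hence the iterated tensor pushout product with \(j'_r\) in place of \(j_r\) is a pushout of the one with \(j_r\); applying this one factor at a time handles several replacements. Evaluation at \(L\) preserves pushouts, and pushouts of h-cofibrations are h-cofibrations by Lemma~\ref{lem:product-hcof}.

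The step I expect to be the main obstacle is the natural coend/Yoneda identification above, with the \(W_i\) interspersed among the free factors in arbitrary order, and checking that it is compatible with the maps of the cube. This compatibility is what lets the colimit over the punctured cube be pulled through \(K(L)\times-\). I would first verify this identification for a single free factor on either side of one \(W\), and then argue by induction on \(p+k\) using associativity of \(\otimes\).
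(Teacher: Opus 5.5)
Your proposal is correct and follows essentially the same route as the paper. You restrict the punctured cube to the subcube where every \(W\)-coordinate is \(1\), and you pull the topological variables of the free cells out of the tensor word to identify the evaluated latching map with \(K(L)\times(i_1\square_\times\cdots\square_\times i_k)\). You then conclude via Lemma~\ref{lem:product-hcof}, and you treat pushouts of the \(j_r\) using the separate colimit preservation of \(\otimes\). The only cosmetic differences are that you describe \(K(L)\) by an explicit coend and co-Yoneda rather than by the copower homeomorphisms and \(\Ffree_\ell U=(\Ffree_\ell\{*\})\times U\), and that you identify the cartesian pushout product directly with \(\mathsf S^{N-1}\hookrightarrow\mathsf D^N\) rather than as a CW-subcomplex inclusion.
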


\begin{proof}
	We first record explicitly how the topological variables in free \(\Pcat\)-spaces pass through the tensor product.  For a space \(U\), the pointwise copower \(D\times U\) satisfies natural homeomorphisms
	\begin{align}
		((D\times U)\otimes E)(L)&\cong(D\otimes E)(L)\times U,\label{eq:copower-left}\\
		(D\otimes(E\times U))(L)&\cong(D\otimes E)(L)\times U.\label{eq:copower-right}
	\end{align}
	Indeed, in the coend formula \eqref{eq:coend}, product with \(U\) commutes with the coproduct and coequalizer defining the coend: the functor \(-\times U\) is a left adjoint because \(\Top\) is cartesian closed.  The same argument in the other variable proves the second homeomorphism.  These homeomorphisms are natural in all their variables because they are induced by the canonical colimit comparison maps.  By \eqref{eq:free}, they apply to a free \(\Pcat\)-space through the identity
	\begin{equation}\label{eq:free-copower}
		\Ffree_\ell U=(\Ffree_\ell\{*\})\times U.
	\end{equation}
	
	We now compute the latching map rather than only its codomain.  Number the \(p+k\) maps in their prescribed order, and put \(I=\{1,\ldots,p+k\}\).  Let \(I_W\subset I\) be the positions occupied by the maps \(\varnothing\to W_i\), and let \(I_F=I\setminus I_W\) be the positions occupied by the free cells.  For \(S\subset I\), let \(C(S)\) be the ordered tensor word obtained by choosing the target of the map in position \(q\) when \(q\in S\) and its source when \(q\notin S\).  The cubical formula for an iterated pushout product gives the canonical map
	\begin{equation}\label{eq:punctured-cube}
		\liminj_{S\subsetneq I}C(S)\longrightarrow C(I);
	\end{equation}
	see Proposition~\ref{prop:calculpushout}.
	
	The tensor product of \(\Pcat\)-spaces preserves colimits separately since it is a biclosed semimonoidal structure; in particular, a tensor word with an empty factor is empty.  It follows that \(C(S)=\varnothing\) unless \(I_W\subset S\).  Empty vertices contribute neither points nor relations to the usual coproduct--coequalizer construction of the colimit in \eqref{eq:punctured-cube}.  Thus, if \(k>0\), its source is the colimit of the subcube
	\[
	C(I_W\cup T),\qquad T\subsetneq I_F.
	\]
	
	Let \(K\) be the ordered tensor word obtained from \(C(I)\) by replacing every free factor \(\Ffree_{\ell_r}\mathsf{D}^{n_r}\) by \(\Ffree_{\ell_r}\{*\}\) and leaving every \(W_i\) unchanged.  For \(T\subset I_F\), relabeling the free cells so that their indices record their order of occurrence, set
	\[
	U_r(T)=
	\begin{cases}
		\mathsf{D}^{n_r},&\text{if the position of \(j_r\) belongs to \(T\)},\\
		\mathsf{S}^{n_r-1},&\text{otherwise}.
	\end{cases}
	\]
	Repeated use of \eqref{eq:copower-left}, \eqref{eq:copower-right}, and \eqref{eq:free-copower} gives a natural homeomorphism
	\begin{equation}\label{eq:free-cube-vertices}
		C(I_W\cup T)(L)\cong
		K(L)\times\prod_{r=1}^{k}U_r(T).
	\end{equation}
	Naturality in the spaces \(U_r(T)\) shows that, under \eqref{eq:free-cube-vertices}, every structure map of the cube is the identity of \(K(L)\) times the corresponding product of identities and inclusions \(\mathsf{S}^{n_r-1}\to \mathsf{D}^{n_r}\).  Colimits of enriched presheaves are computed objectwise \cite[Proposition~5.3]{dgrtop}, and \(K(L)\times-\) preserves colimits because \(\Top\) is cartesian closed. Consequently, evaluation of the whole map \eqref{eq:punctured-cube}, not merely of its terminal vertex, is canonically isomorphic to
	\begin{equation}\label{eq:free-latching-product}
		K(L)\times
		\bigl(i_1\mathbin{\square_\times}\cdots
		\mathbin{\square_\times}i_k\bigr),
		\qquad i_r:\mathsf{S}^{n_r-1}\hookrightarrow \mathsf{D}^{n_r},
	\end{equation}
	where the free cells are listed in their order of occurrence in the tensor word. The iterated pushout product in \eqref{eq:free-latching-product} is a CW-subcomplex inclusion---the union of the coordinate boundaries---and hence a q-cofibration since the q-model structure of k-spaces is cartesian monoidal by \cite[Proposition~4.2.11]{MR99h:55031}. The source and the target of this q-cofibration are Hausdorff \(\Delta\)-generated spaces. Thus the map is a q-cofibration of \(\Top\). It is therefore an h-cofibration of \(\Top\), and its product with \(K(L)\) is an h-cofibration by Lemma~\ref{lem:product-hcof}.  If \(k=0\), every \(C(S)\) for \(S\subsetneq I\) in \eqref{eq:punctured-cube} is empty, so the evaluated map is \(\varnothing\to C(I)(L)\); this is an h-cofibration because every space is h-cofibrant.
	
	Finally, the functor that forms a pushout product preserves pushouts in each arrow variable: this follows by writing its source as a pushout and using the separate colimit preservation of \(\otimes\).  Replacing some \(j_r\) by pushouts of \(j_r\) therefore replaces the map just treated by a finite composite of pushouts of such maps.  It remains an h-cofibration by Lemma~\ref{lem:product-hcof}.
\end{proof}

\begin{lem}[Reedy comparison]\label{lem:Reedy-comparison}
Let \(\mathcal R\) be a small Reedy category for which the colimit functor is left Quillen.  Let \(D,D':\mathcal R\to\Top\) be Reedy cofibrant for the h-model structure.  If \(D\to D'\) is objectwise a weak homotopy equivalence, then
\[
 \liminj_{\mathcal R}D\longrightarrow\liminj_{\mathcal R}D'
\]
is a weak homotopy equivalence.
\end{lem}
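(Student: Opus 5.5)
The plan is to reduce the statement to the fact that the h-model structure has the homotopy equivalences as weak equivalences, and then to upgrade weak homotopy equivalences to homotopy equivalences through a cofibrant replacement in the q-model structure. Since the colimit functor is left Quillen for the Reedy model structure on \([\mathcal R,\Top]\) built from the h-model structure, it preserves weak equivalences between Reedy cofibrant diagrams (Ken Brown's lemma). Here the weak equivalences are objectwise homotopy equivalences. A direct application is therefore impossible, because \(D\to D'\) is only an objectwise \emph{weak} homotopy equivalence. I will therefore compare both \(D\) and \(D'\) with a q-Reedy cofibrant replacement.

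The steps, in order, are as follows.
\begin{enumerate}
\item Assume, as the hypothesis intends, that the colimit functor is left Quillen for both Reedy structures, q and h; the Reedy-cofibrancy condition on \(\mathcal R\) is purely combinatorial and depends only on the category. Take a q-Reedy cofibrant replacement \(Q\to D\) that is an objectwise trivial q-fibration. Then factor the composite \(Q\to D\to D'\) in the q-Reedy model structure as a q-Reedy cofibration \(Q\to Q'\) followed by an objectwise trivial q-fibration \(Q'\to D'\). Two-out-of-three gives that \(Q\to Q'\) is an objectwise weak homotopy equivalence between q-Reedy cofibrant diagrams.
\item Left Quillen-ness of the colimit in the q-Reedy structure then shows that \(\liminj Q\to\liminj Q'\) is a weak homotopy equivalence.
\item It remains to show that \(\liminj Q\to\liminj D\) is a weak homotopy equivalence, and likewise for \(Q'\) and \(D'\). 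Every q-cofibration is an h-cofibration, and latching objects are the same colimits in both structures, so \(Q\) is also h-Reedy cofibrant. Hence \(Q\to D\) is a map between h-Reedy cofibrant diagrams which is objectwise a weak homotopy equivalence. On its own this does not yet help.
\end{enumerate}

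Step 3 is the main obstacle, since the h-structure only sees genuine homotopy equivalences. I would handle it by passing to mapping spaces. For any space \(Z\) that is fibrant in both senses, which every space is in the h-structure, consider the derived mapping space \(\mathrm{Map}(\liminj D,Z)\). It equals the limit of \(\TOP(D(-),Z)\), which is a Reedy fibrant diagram because the matching maps are duals of h-cofibrations, hence Hurewicz fibrations. The map \(\TOP(D',Z)\to\TOP(D,Z)\) need not be a weak equivalence for general \(Z\). So I would instead take \(Z\) to be a CW complex and use Milnor-type results: \(\TOP(-,Z)\) sends weak homotopy equivalences between spaces of the homotopy type of CW complexes to homotopy equivalences. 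This is too restrictive.

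The cleaner alternative I would actually pursue is the following. Use the \emph{mixed} model structure (Cole), whose weak equivalences are the weak homotopy equivalences and whose cofibrations are the h-cofibrations whose objects have CW homotopy type. Failing that, prove directly by induction on Reedy degree that the colimit of an h-Reedy cofibrant diagram is a homotopy colimit for weak homotopy equivalences. The colimit is built by iterated pushouts along h-cofibrations \(L_rD\to D(r)\) and transfinite composites of h-cofibrations (relative-\(T_1\), so compact sets factor through finite stages). By the gluing lemma and the transfinite argument of \cite{hocolimfacile} for weak homotopy equivalences along h-cofibrations, the latching and colimit maps induced by \(D\to D'\) are weak homotopy equivalences. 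I expect this gluing-lemma induction to be the substantive part of the proof.
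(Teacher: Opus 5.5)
Your route differs from the paper's and can be made to work, but as written it has a gap at exactly the point where the lemma's hypothesis must be used.

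\textbf{Comparison.} Steps~1--3 are circular. Step~3 asks that \(\liminj Q\to\liminj D\) be a weak homotopy equivalence for an objectwise weak homotopy equivalence between h-Reedy cofibrant diagrams, which is a special case of the lemma itself. The q-Reedy replacement therefore buys nothing and can be dropped. Cole's mixed structure does not apply either, because the values of \(D\) are arbitrary spaces, not of CW homotopy type; in the paper they are evaluations of tensor products of arbitrary path \(\Pcat\)-spaces. What remains is your final route: induction on Reedy degree, using the gluing lemma for weak homotopy equivalences along h-cofibrations and compactness along relative-\(T_1\) towers (compact sets factor through \emph{some} stage, not necessarily a finite one). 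The paper instead argues in three lines. Reedy h-cofibrancy and left Quillen-ness make \(\liminj D\) and \(\liminj D'\) compute h-homotopy colimits. The Dugger--Isaksen comparison \cite[Theorem~A.7]{hocolimfacile}, adapted in \cite[Theorem~2.8]{leftproperflow}, identifies h- and q-homotopy colimits up to weak homotopy type. The q-homotopy colimit is invariant under objectwise weak homotopy equivalences, and two-out-of-three concludes. Your route is in effect a by-hand proof of that comparison. It is self-contained, but you must do the point-set work that the citation hides.

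\textbf{The gap.} Your final argument never uses the hypothesis that \(\liminj\) is left Quillen. You assert that \(\liminj D\) is built by iterated pushouts along the latching maps \(L_rD\to D(r)\). That is true for direct categories. However, the lemma is stated, and applied in Proposition~\ref{prop:globe-step}, for non-direct Reedy categories such as \(\mathcal R_{u,v}(S)\), whose composition maps lower degree. For general Reedy categories the assertion is false. Take the span \(a\leftarrow r\rightarrow b\) with both arrows degree-lowering. All latching objects are empty, so every diagram is Reedy h-cofibrant, and \(\liminj\) is an arbitrary pushout. Pushouts are not homotopy invariant: compare \([0,1]\leftarrow\mathsf{S}^0\rightarrow[0,1]\) with \(\{*\}\leftarrow\mathsf{S}^0\rightarrow\{*\}\), whose pushouts are a circle and a point.

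\textbf{The fix.} Use left Quillen-ness. For the h-structure it forces every matching category \(\partial(r\downarrow\mathcal R^-)\) to be empty or connected. Otherwise the matching map of the constant diagram at \([0,1]\) is a diagonal into a power of \([0,1]\) with at least two factors, which is not a Hurewicz fibration. Filtering the terminal weight by its Reedy skeleta then shows the following. \(\liminj D\) is the transfinite composite, over degrees, of pushouts of \(\coprod_r(L_rD\to D(r))\), where \(r\) runs only over objects of the given degree with \emph{empty} matching category. Objects with nonempty connected matching category contribute isomorphisms.

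You also need \(L_rD\to L_rD'\) to be a weak homotopy equivalence. This follows by running the same induction on the direct latching category \(\partial(\mathcal R^+\downarrow r)\), on which the restricted diagrams remain Reedy cofibrant. With these points supplied, your gluing-lemma induction goes through.
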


\begin{proof}
Because \(D\) and \(D'\) are Reedy h-cofibrant and colimit is left Quillen, their ordinary colimits compute their h-model homotopy colimits.  For every small diagram of spaces, its q-model and h-model homotopy colimits have the same weak homotopy type by the Dugger--Isaksen comparison (\cite[Theorem~A.7]{hocolimfacile} adapted for \(\Delta\)-generated spaces in \cite[Theorem~2.8]{leftproperflow}). The q-model homotopy colimit, being a derived functor, carries an objectwise q-weak equivalence to a q-weak equivalence.  Applying these facts to the map \(D\to D'\) and using the two-out-of-three property gives the result.  This is precisely the homotopy-colimit step of \cite[proof of Theorem~5.3]{leftproperflow}.
\end{proof}

\begin{cor}[transfinite gluing]\label{cor:transfinite-gluing}
Let \(X\to Y\) be a map of transfinite towers \(\lambda\to\Top\).  Suppose that all successor maps in both towers are h-cofibrations and that \(X_\mu\to Y_\mu\) is a weak homotopy equivalence for every \(\mu<\lambda\). Then \(\liminj X\to\liminj Y\) is a weak homotopy equivalence.
\end{cor}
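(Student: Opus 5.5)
This is a direct application of Lemma~\ref{lem:Reedy-comparison}. I take the indexing category to be the ordinal \(\lambda\) itself and regard the towers \(X,Y:\lambda\to\Top\) as diagrams on it. The ordinal \(\lambda\), viewed as a poset, is a direct Reedy category with degree function the identity on ordinals. Its colimit functor is left Quillen for the h-model structure, because its right adjoint (the constant diagram functor) preserves fibrations and trivial fibrations into the Reedy structure on \(\lambda\)-diagrams. On a direct category the Reedy fibrations are the objectwise fibrations, so this is immediate. Hence the hypothesis on \(\mathcal R\) in Lemma~\ref{lem:Reedy-comparison} is satisfied.

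It remains to check that \(X\) and \(Y\) are Reedy cofibrant for the h-model structure. For \(\mu<\lambda\), the latching object \(L_\mu X=\liminj_{\nu<\mu}X_\nu\) is computed separately in three cases.
\begin{enumerate}
\item For \(\mu=0\), the latching object is empty, and the latching map \(\varnothing\to X_0\) is an h-cofibration because every space is h-cofibrant.
\item For a successor \(\mu=\nu+1\), the latching object is \(X_\nu\), and the latching map is the successor map \(X_\nu\to X_{\nu+1}\), which is an h-cofibration by hypothesis.
\item For a limit ordinal \(\mu\), the tower condition gives \(\liminj_{\nu<\mu}X_\nu\cong X_\mu\). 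The latching map is then an isomorphism, hence an h-cofibration.
\end{enumerate}
The same argument applies to \(Y\). Lemma~\ref{lem:Reedy-comparison} then applies to the objectwise weak homotopy equivalence \(X\to Y\) and gives the conclusion.

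The main obstacle is whether Lemma~\ref{lem:Reedy-comparison} may be applied for an arbitrary ordinal \(\lambda\), that is, whether the colimit functor really is left Quillen there. The argument above for direct categories is standard, so I expect this to be unproblematic.

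If one preferred to avoid the Reedy formalism, there is a fallback. Compact sets in \(\Delta\)-generated spaces factor through a finite stage of a transfinite composite of relative-\(T_1\) inclusions (by \cite{hocolimfacile}), and h-cofibrations are relative-\(T_1\). Consequently homotopy groups and path components commute with the colimit, and the result follows from the hypothesis at each stage \(\mu\). I would state the Reedy route as the main proof and mention this compactness argument as a remark.
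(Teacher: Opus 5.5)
Your proposal is correct and follows the paper's proof essentially step for step. You regard \(\lambda\) as a direct Reedy category and check Reedy h-cofibrancy through the three latching cases (\(\varnothing\to X_0\), the successor maps, and the isomorphisms at limit ordinals), note that the constant-diagram functor preserves the objectwise h-fibrations and trivial h-fibrations so that colimit is left Quillen, and then invoke Lemma~\ref{lem:Reedy-comparison}. Your compactness fallback is also viable, but a compact subset of a transfinite composite of relative-\(T_1\) inclusions factors through some stage \(\mu<\lambda\), not necessarily a finite one.
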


\begin{proof}
	Regard the ordinal \(\lambda\) as a direct category, and hence as a Reedy category, with degree function \(d(\mu)=\mu\). For either tower, the latching map at a successor ordinal \(\mu+1\) is \(X_\mu\to X_{\mu+1}\); at a nonzero limit ordinal \(\mu\), it is the canonical map
	\[
	\liminj_{\nu<\mu}X_\nu\longrightarrow X_\mu,
	\]
	which is an isomorphism by definition of a transfinite tower; and at \(0\), it is \(\varnothing\to X_0\). The successor latching maps are h-cofibrations by hypothesis, the limit latching maps are isomorphisms, and the latching map at \(0\) is an h-cofibration because every space is h-cofibrant. Thus both towers are Reedy h-cofibrant. Moreover, the constant-diagram functor, which is right adjoint to the colimit functor, preserves h-fibrations and trivial h-fibrations because these are defined objectwise. Consequently, the colimit functor is left Quillen; see \cite[Definition~5.1.1 and Corollary~5.1.6]{MR99h:55031}. Lemma~\ref{lem:Reedy-comparison} therefore applies.
\end{proof}

We recall the Reedy description needed below.  Given a set \(S\) and \(u,v\in S\), let \(\mathcal R_{u,v}(S)\) denote the category written \(\Pcat_{u,v}(S)\) in \cite[Section~9]{Moore1}; the change of letter avoids confusion with the reparametrization category \(\Pcat\).  Its objects are finite words
\[
 ((u_0,\epsilon_1,u_1),\ldots,
   (u_{m-1},\epsilon_m,u_m)),
\]
where \(u_i\in S\), \(\epsilon_i\in\{0,1\}\), and \(\epsilon_i=1\) forces \((u_{i-1},u_i)=(u,v)\).  Its arrows are generated by contractions of two adjacent \(0\)-labelled entries (the composition maps) and by replacements \((u,0,v)\to(u,1,v)\) (the inclusion maps), subject to the associativity and interchange relations.  The complete presentation is \cite[Section~9, before Notation~9.5]{Moore1}.  It is a Reedy category with degree \(m+\sum_i\epsilon_i\); inclusion maps raise degree and composition maps lower it.  For a diagram \(D\), the latching object at
\(\mathbf n\) is
\[
 L_{\mathbf n}D=
 \liminj_{\partial(\mathcal R_{u,v}(S)^+\downarrow\mathbf n)}D.
\]
The Reedy model structure exists for every model target, and its colimit functor is left Quillen \cite[Theorem~9.6]{Moore1}.

Consider a pushout of \(\Pcat\)-flows
\begin{equation}\label{eq:globe-pushout}
\begin{tikzcd}
 \operatorname{Glob}(\partial Z) \arrow[r,"g"] \arrow[d]
   & A \arrow[d,"f"] \\
 \operatorname{Glob}(Z) \arrow[r] & X \cocartesian
\end{tikzcd}
\end{equation}
and put \(u=g(0)\), \(v=g(1)\).  Define
\begin{equation}\label{eq:T-pushout}
 T=Z\amalg_{\partial Z}\Path_{u,v}A.
\end{equation}
The diagram
\(D_f:\mathcal R_{u,v}(A^0)\to\PSp\) assigns to the word
\(\mathbf n\) above the ordered tensor product
\begin{equation}\label{eq:Df}
 D_f(\mathbf n)=Z_1\otimes\cdots\otimes Z_m,
 \qquad
 Z_i=\begin{cases}
  \Path_{u_{i-1},u_i}A,&\epsilon_i=0,\\
  T,&\epsilon_i=1.
 \end{cases}
\end{equation}
The inclusion arrows use \(\Path_{u,v}A\to T\) and the contraction arrows use composition in \(A\).  This is well defined and
\begin{equation}\label{eq:colim-Df}
 \liminj D_f\cong\Path X
\end{equation}
by \cite[Theorem~9.7]{Moore1}.  Its latching map is the iterated tensor pushout product of the maps \(\varnothing\to\Path_{a,b}A\) belonging to the \(0\)-entries and the maps \(\Path_{u,v}A\to T\) belonging to the \(1\)-entries; if all entries are \(0\), the latching object is empty.  This is \cite[Proposition~9.9]{Moore1}. Colimits of enriched presheaves are computed objectwise \cite[Proposition~5.3]{dgrtop}; consequently, evaluation at \(L\) commutes with the latching colimits and with the colimit in \eqref{eq:colim-Df}.

\begin{proposition}[globe step]\label{prop:globe-step}
Let \(s: A\to A'\) be a weak equivalence of \(\Pcat\)-flows.  Let \(\partial Z\to Z\) be one of the generating maps \(\Ffree_\ell \mathsf{S}^{n-1}\to\Ffree_\ell \mathsf{D}^n\), and form the two pushouts
\[
 X=A\mathop{\coprod}_{\operatorname{Glob}(\partial Z)}
       \operatorname{Glob}(Z),
 \qquad
 X'=A'\mathop{\coprod}_{\operatorname{Glob}(\partial Z)}
       \operatorname{Glob}(Z),
\]
where the second attaching map is the first one followed by \(s\).  Then the induced map \(\bar s: X\to X'\) is a weak equivalence.  Moreover \(\Path A(L)\to\Path X(L)\) is an h-cofibration for every \(L\in\Obj(\Pcat)\).
\end{proposition}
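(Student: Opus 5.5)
The plan is to reduce everything to the Reedy description recalled just before the statement, and then apply Lemma~\ref{lem:Reedy-comparison} objectwise. Since \(s\) is a weak equivalence, \(s^0\) is a bijection, so \(X^0=A^0\) and \(X'^0=A'^0\) are identified. Under this identification the indexing Reedy categories \(\mathcal R_{u,v}(A^0)\) and \(\mathcal R_{s(u),s(v)}(A'^0)\) coincide. Put \(T'=Z\amalg_{\partial Z}\Path_{s(u),s(v)}A'\). Then \(s\) induces a natural transformation \(D_f\to D_{f'}\), using \(\Path_{a,b}s\) on \(0\)-entries and the induced map \(T\to T'\) on \(1\)-entries. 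This is compatible with inclusion arrows, by functoriality of the pushout \eqref{eq:T-pushout}, and with contraction arrows, because \(s\) is a \(\Pcat\)-flow map. Taking colimits and using \eqref{eq:colim-Df}, the resulting map is \(\Path\bar s\), so it suffices to show that \(\liminj D_f(L)\to\liminj D_{f'}(L)\) is a weak homotopy equivalence for every \(L\).

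First I would check that the map \(D_f(\mathbf n)\to D_{f'}(\mathbf n)\) is an objectwise weak homotopy equivalence at every vertex. The map \(T\to T'\) is a pushout of the objectwise weak equivalence \(\Path_{u,v}A\to\Path_{s(u),s(v)}A'\) along the projective q-cofibration \(\partial Z\to Z\). It is therefore a weak equivalence by left properness of the projective model structure on \(\PSp\), recalled in Section~\ref{sec:3}. Each vertex is a nonempty finite tensor word, so Lemma~\ref{lem:finite-tensor-coproduct} gives the vertexwise equivalence. Notably no cofibrancy of the \(\Path_{a,b}A\) is needed, thanks to Theorem~\ref{thm:tensor}.

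Next I would show that both diagrams \(L\mapsto D_f(-)(L)\) and \(D_{f'}(-)(L)\) are Reedy cofibrant for the h-model structure on \(\Top\). By \cite[Proposition~9.9]{Moore1}, the latching map at \(\mathbf n\) is an iterated tensor pushout product of maps \(\varnothing\to\Path_{a,b}A\) and maps \(\Path_{u,v}A\to T\). The latter is a pushout of the generator \(\Ffree_\ell\mathsf S^{n-1}\to\Ffree_\ell\mathsf D^n\). Evaluation at \(L\) commutes with the latching colimit since colimits are objectwise. Lemma~\ref{lem:free-latching}, in its final clause about pushouts of \(j_r\), then shows that every evaluated latching map is an h-cofibration; the all-zero case gives \(\varnothing\to D_f(\mathbf n)(L)\). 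The colimit functor on this Reedy category is left Quillen by \cite[Theorem~9.6]{Moore1}, and this transfers to \(\Top\) with the h-model structure. Lemma~\ref{lem:Reedy-comparison} then yields the weak equivalence, and since \(\bar s^0\) is a bijection, \(\bar s\) is a weak equivalence.

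For the h-cofibration claim, \(\Path A(L)\) is the colimit of the full subdiagram of words with all \(\epsilon_i=0\), which is a Reedy-closed (cosieve-free, downward closed) subcategory. The map \(\Path A(L)\to\Path X(L)\) is obtained by attaching the remaining vertices in order of degree. Each stage is a pushout of coproducts of latching maps, which are h-cofibrations, so the whole map is a transfinite composite of pushouts of h-cofibrations. It is therefore an h-cofibration by Lemma~\ref{lem:product-hcof}. I expect this step to be the main obstacle. One must verify that the skeletal filtration of a Reedy-cofibrant diagram relative to the all-zero subcategory really presents \(\Path A\to\Path X\) this way. The delicate point is that words with \(\epsilon\equiv0\) whose latching object is nonempty through contractions are still within \(\Path A\). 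If that bookkeeping fails, I would instead argue via the standard fact that for Reedy cofibrant diagrams, the map from the colimit over a subcategory closed under lower-degree maps is a cofibration in the target model structure.
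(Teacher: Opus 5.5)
Your proof of the weak-equivalence assertion is the paper's argument: left properness of \(\PSp\) for \(T\to T'\), Lemma~\ref{lem:finite-tensor-coproduct} at each vertex, Reedy h-cofibrancy from \cite[Proposition~9.9]{Moore1} and Lemma~\ref{lem:free-latching}, then Lemma~\ref{lem:Reedy-comparison}.

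The gap is in the h-cofibration assertion. Your filtration treats \(\mathcal R_{u,v}(A^0)\) as if it were a direct category, but contraction arrows lower degree. For any state \(w\), the word \(\mathbf n=((u,0,w),(w,0,u),(u,1,v))\) has degree \(4\). It contracts to \(((u,0,u),(u,1,v))\), which has degree \(3\) and is already present when \(\mathbf n\) is added. So every point of \(D_f(\mathbf n)=\Path_{u,w}A\otimes\Path_{w,u}A\otimes T\) already has an image in the colimit built so far, and adding \(\mathbf n\) can only impose identifications. The step is therefore not a pushout along \(L_{\mathbf n}D_f=\Path_{u,w}A\otimes\Path_{w,u}A\otimes\Path_{u,v}A\to D_f(\mathbf n)\); such a pushout would adjoin a second copy of composites that are already there. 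Your claim that each stage is a pushout of latching maps is thus false as stated. Deciding which vertices genuinely attach cells, and showing that the others create no new identifications, is the connectedness content behind the left-Quillen property of the colimit \cite[Theorem~9.6]{Moore1}. It is not mere bookkeeping. The difficulty you flag is also misplaced: all-zero words have empty latching objects, because latching objects use only inclusion arrows. Finally, your fallback \emph{standard fact} is not a standard result for general Reedy categories, and you give no argument for it.

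The missing idea is to compare \(D_f\) with the diagram \(D_{\id_A}\) on the same Reedy category. It is built like \(D_f\) but with \(\Path_{u,v}A\) in place of \(T\) at the \(1\)-entries, so inclusion arrows act by identities, and its colimit is \(\Path A\). It is in fact the left Kan extension of your all-zero subdiagram. Indeed, every arrow from an all-zero word to \(\mathbf n\) factors uniquely through the inclusion of the word obtained from \(\mathbf n\) by setting all labels to \(0\). So this comparison is the precise form of your fallback.

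The natural map \(D_{\id_A}\to D_f\) has relative latching maps that are isomorphisms at all-zero words. At a word containing a \(1\), the relative latching map is an iterated tensor pushout product of maps \(\varnothing\to\Path_{a,b}A\) and at least one copy of \(\Path_{u,v}A\to T\) \cite[proof of Theorem~9.10]{Moore1}. Since \(L_{\mathbf n}D_{\id_A}\cong D_{\id_A}(\mathbf n)\) at such words, it is exactly your latching map \(L_{\mathbf n}D_f\to D_f(\mathbf n)\). By Lemma~\ref{lem:free-latching}, these maps are h-cofibrations after evaluation at \(L\), so \(D_{\id_A}(L)\to D_f(L)\) is a Reedy h-cofibration. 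Because the colimit is left Quillen, \(\Path A(L)\to\Path X(L)\) is then an h-cofibration. This is the paper's argument: it lets \cite[Theorem~9.6]{Moore1} absorb all the filtration issues.
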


\begin{proof}
Since \(s\) is a weak equivalence, it is bijective on states.  After relabelling \(A'^0\), we may suppose that both state sets and the attaching endpoints \(u,v\) agree.  Let
\[
 T=\Path_{u,v}A\mathop{\coprod}_{\partial Z}Z,
 \qquad
 T'=\Path_{u,v}A'\mathop{\coprod}_{\partial Z}Z.
\]
The map \(\partial Z\to Z\) is a projective q-cofibration of \(\Pcat\)-spaces, and the projective q-model structure on \(\PSp\) is left proper by \cite[Corollary~6.5]{dgrtop}.  Therefore \(T\to T'\) is an objectwise weak homotopy equivalence.

The map \(\Path_{u,v}A\to T\) is a pushout of the free cell \(\partial Z\to Z\).  Consequently, Proposition~9.9 of \cite{Moore1} and Lemma~\ref{lem:free-latching} imply that, after evaluation at any \(L\), all latching maps of \(D_f\) are h-cofibrations; the all-zero case uses \(\varnothing\to D_f(\mathbf n)(L)\).  Thus \(D_f(L)\) is Reedy h-cofibrant. The same reasoning applies to \(D_{f'}(L)\).

There is a natural map \(D_f\to D_{f'}\).  Every one of its vertices is a finite ordered tensor product of maps which are either \(\Path_{a,b}A\to\Path_{a,b}A'\) or \(T\to T'\).  All these factor maps are objectwise weak homotopy equivalences, so every vertex map is one by Lemma~\ref{lem:finite-tensor-coproduct}.  Lemma~\ref{lem:Reedy-comparison} and \eqref{eq:colim-Df} now give an objectwise weak homotopy equivalence
\[
 \Path X\cong\liminj D_f\longrightarrow
 \liminj D_{f'}\cong\Path X'.
\]
The globe attachment does not change the set of states: both globes have the same two states and the state map between them is the identity.  Hence \(X^0=A^0\to A'^0=X'^0\) is a bijection.  The map of total path spaces preserves the coproduct summand indexed by each ordered pair of states. Because these summands are open and closed, the calculation of \(\pi_0\) and the based homotopy groups used in Lemma~\ref{lem:finite-tensor-coproduct} shows that the restriction to every summand is a weak homotopy equivalence.  The characterization recalled after Definition~\ref{def:Pflow} therefore proves that \(\bar s\) is a weak equivalence.

It remains to prove the last assertion.  Use the map of diagrams \(D_{\id_A}\to D_f\) from the proof of \cite[Theorem~9.10]{Moore1}.  Its relative latching map is an isomorphism when all labels are zero; otherwise it is an iterated tensor pushout product of maps \(\varnothing\to\Path_{a,b}A\) and at least one copy of \(\Path_{u,v}A\to T\); see \cite[proof of Theorem~9.10, cases (a) and (b)]{Moore1}.  After evaluation at \(L\) it is an h-cofibration by Lemma~\ref{lem:free-latching}.  Thus \(D_{\id_A}(L)\to D_f(L)\) is a Reedy h-cofibration.  Since colimit is left Quillen \cite[Theorem~9.6]{Moore1}, its colimit \(\Path A(L)\to\Path X(L)\) is an h-cofibration.
\end{proof}

The following explicit calculation supplies both the weak-equivalence and the cofibration assertions for the generator \(R\).

\begin{lem}[reduced-word calculation]\label{lem:R-step}
Let \(a,b\in A^0\), and let \(X\) be the pushout of \(A\) along the map \(R:\{0,1\}\to\{0\}\) which sends \(0\) to \(a\) and \(1\) to \(b\). Then \(X^0=A^0/{\sim}\), where \(\sim\) identifies \(a\) and \(b\), and \(\Path A(L)\to\Path X(L)\) is an h-cofibration for every \(L\in\Obj(\Pcat)\).

If \(s: A\to A'\) is a weak equivalence and \(X'\) is the corresponding pushout from \(A'\), then the induced map \(X\to X'\) is a weak equivalence.
\end{lem}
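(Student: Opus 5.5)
We compute the pushout explicitly. Pushouts along \(R\) do not change the path \(\Pcat\)-spaces directly; they only identify \(a\) and \(b\). Since colimits can create execution paths, \(\Path X\) is then given by the free semicategory on the identified graph, modulo the compositions already present in \(A\).

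\emph{Step 1: the state set and the reduced-word formula.} The state set of a colimit of \(\Pcat\)-flows is the colimit of the state sets, so \(X^0=A^0/{\sim}\). Write \(\pi:A^0\to X^0\) for the quotient. For states \(\bar\alpha,\bar\beta\in X^0\), I would establish the formula
\[
\Path_{\bar\alpha,\bar\beta}X\cong\coprod_{m\geq1}\ \coprod
\Path_{\alpha_0,\beta_0}A\otimes\Path_{\alpha_1,\beta_1}A\otimes\cdots\otimes\Path_{\alpha_{m-1},\beta_{m-1}}A .
\]
The inner coproduct runs over the \emph{reduced} words: \(\pi(\alpha_0)=\bar\alpha\), \(\pi(\beta_{m-1})=\bar\beta\), \(\pi(\beta_i)=\pi(\alpha_{i+1})\), but \(\beta_i\neq\alpha_{i+1}\) in \(A^0\), so that no adjacent pair is composable in \(A\). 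Composition concatenates two words, uses the composition of \(A\) at a junction whose endpoints are genuinely equal, and then reduces.

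The proof of the formula is the usual one for ordinary flows. One checks the semicategory axioms, using associativity of \(\otimes\) and distributivity of \(\otimes\) over coproducts, which holds because \(\otimes\) preserves colimits separately. One then verifies the universal property directly. A map \(X\to Y\) amounts to a map \(A\to Y\) identifying \(a\) and \(b\), and it extends uniquely to words by composing in \(Y\).

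\emph{Step 2: the cofibration assertion.} The words of length one with \(\pi(\alpha_0)=\bar\alpha\) and \(\pi(\beta_0)=\bar\beta\) contain a copy of each \(\Path_{\alpha,\beta}A\), and \(\Path A\to\Path X\) is the inclusion of these summands. After evaluation at \(L\), which commutes with colimits, this is a coproduct-summand inclusion \(K\to K\amalg K'\). Such a map is the pushout of \(\varnothing\to K'\), which is an h-cofibration because every space is h-cofibrant. Lemma~\ref{lem:product-hcof} then shows that the inclusion is an h-cofibration.

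\emph{Step 3: the weak equivalence.} The map \(s\) is bijective on states. After relabelling, \(A^0=A'^0\), and the reduced words for \(X\) and \(X'\) are then indexed by the same combinatorial data, because the reducedness condition depends only on states. The map \(X\to X'\) is therefore the identity on states. On each summand it is a finite tensor product of maps \(\Path_{\alpha,\beta}A\to\Path_{\alpha,\beta}A'\). These are objectwise weak homotopy equivalences, so each summand map is one by Lemma~\ref{lem:finite-tensor-coproduct}. The coproduct over all words is then one by the coproduct part of the same lemma, and the characterization of weak equivalences of \(\Pcat\)-flows concludes.

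The main obstacle is Step 1, specifically the universal-property check that reduced words really form the pushout. Composition must be well defined and associative when a concatenation produces a junction that is composable in \(A\). One must also handle the degenerate cases \(a=b\) and loops. I would argue by induction on word length, using that composition in \(A\) is only applied at genuine equalities of states. Alternatively, I would cite the corresponding computation for ordinary flows in \cite{leftproperflow}, replacing \(\times\) by \(\otimes\); the argument only uses that \(\otimes\) distributes over coproducts.
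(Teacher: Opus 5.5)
Your proposal is correct and follows essentially the same route as the paper: the reduced-word normal form for \(\Path X\), the identification of \(\Path A(L)\to\Path X(L)\) as a coproduct-summand inclusion, and Lemma~\ref{lem:finite-tensor-coproduct} applied summandwise for the weak equivalence. The differences are minor. You obtain the h-cofibration as a pushout of \(\varnothing\to K'\), whereas the paper checks closedness and the homotopy extension property directly. The paper also disposes of \(a=b\) separately, while your formula already covers that case, since then no word of length \(\geq 2\) is reduced.
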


\begin{proof}
If \(a=b\), the pushout is \(A\) and there is nothing to prove.  Suppose that \(a\neq b\).  Write \([c]\) for the class of a state \(c\in A^0\).  A \emph{reduced word} is a finite sequence
\[
 w=((\alpha_1,\beta_1),\ldots,(\alpha_m,\beta_m)),\qquad m\geq1,
\]
such that \([\beta_i]=[\alpha_{i+1}]\) but \(\beta_i\neq\alpha_{i+1}\) for \(1\leq i<m\).  Associate to it the \(\Pcat\)-space
\begin{equation}\label{eq:word-space}
 \Path_w A=
 \Path_{\alpha_1,\beta_1}A\otimes\cdots\otimes
 \Path_{\alpha_m,\beta_m}A.
\end{equation}
For two classes \(\bar\alpha,\bar\beta\in A^0/{\sim}\), put
\begin{equation}\label{eq:R-normal-form}
 \Path_{\bar\alpha,\bar\beta}X=
 \coprod_{\substack{w\ \mathrm{reduced}\\
 [\alpha_1]=\bar\alpha,\\
 [\beta_m]=\bar\beta}}
 \Path_w A.
\end{equation}

We justify that \eqref{eq:R-normal-form} is the pushout path object.  Two reduced words whose terminal and initial classes agree are composed by concatenation if their actual middle states are different, and, if those states agree, by applying the composition of \(A\) to the last factor of the first word and the first factor of the second.  The result is reduced: the two adjacent inequalities on either side, when present, are unchanged. These maps are maps of \(\Pcat\)-spaces because \(\otimes\) is functorial and the composition of \(A\) is enriched.  They are associative by the associativity of the composition of \(A\) (the only possible ambiguity is the order in which a consecutive block with equal actual intermediate states is composed).  Thus \eqref{eq:R-normal-form} defines a \(\Pcat\)-flow and the one-letter words define a map \(A\to X\) identifying \(a\) and \(b\).

If \(h: A\to Y\) is any map of \(\Pcat\)-flows with \(h(a)=h(b)\), there is a unique extension \(X\to Y\): on the summand \eqref{eq:word-space}, it is the finite composition in \(Y\) of the images of its factors.  This extension is continuous and enriched because it is a map on each coproduct summand, built from tensor products and the enriched composition maps of \(Y\). Uniqueness follows because every word is a composite of its one-letter factors.  This proves the pushout universal property and therefore \eqref{eq:R-normal-form}.

The one-letter summands in \eqref{eq:R-normal-form} form precisely \(\Path A\).  Hence \(\Path A(L)\to\Path X(L)\) is the inclusion of a collection of coproduct summands.  Such an inclusion is closed and has the homotopy extension property: on every complementary summand, extend a prescribed initial map by the constant homotopy.  It is therefore an h-cofibration.

Now let \(s: A\to A'\) be a weak equivalence.  Its bijection on states identifies the reduced-word indexing sets for \(A\) and \(A'\).  On the summand indexed by \(w\), the induced map is the finite tensor product of the objectwise weak homotopy equivalences on the factors of \eqref{eq:word-space}; it is an objectwise weak homotopy equivalence by Lemma~\ref{lem:finite-tensor-coproduct}.  Taking the coproduct over all words preserves this property by the same lemma.  The quotient map on states is a bijection because it is obtained from a bijection of state sets by identifying the corresponding pair.  Thus \(X\to X'\) is a weak equivalence.
\end{proof}

\begin{thm}\label{thm:left-proper}
For every reparametrization category with cuts \(\Pcat\), the q-model structure on \(\PFlow\) is left proper.
\end{thm}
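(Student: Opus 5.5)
We reduce to relative cell complexes built from the generating q-cofibrations \eqref{eq:Pflow-generators} and then pass to retracts. Let \(i: A\to B\) be a q-cofibration of \(\PFlow\) and \(s: A\to A'\) a weak equivalence, and let \(B'=A'\amalg_A B\). By the small object argument, \(i\) is a retract of a transfinite composite \(A=X_0\to X_1\to\cdots\to X_\lambda\) in which each \(X_\mu\to X_{\mu+1}\) is a pushout of a generator \(C\), \(R\), or \(\operatorname{Glob}(\Ffree_\ell\mathsf{S}^{n-1})\to\operatorname{Glob}(\Ffree_\ell\mathsf{D}^n)\). Setting \(X'_\mu=A'\amalg_A X_\mu\), each \(X'_\mu\to X'_{\mu+1}\) is the pushout of the same generator along the composite attaching map. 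Since weak equivalences are closed under retracts, and the pushout of a retract is a retract of the pushout, it suffices to prove that \(X_\lambda\to X'_\lambda\) is a weak equivalence.

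We argue by transfinite induction that each \(X_\mu\to X'_\mu\) is a weak equivalence. For the successor step we distinguish the three kinds of generator. For \(C\), a new state with no paths is added: the path \(\Pcat\)-spaces are unchanged apart from new empty summands, the state map remains a bijection, and \(\Path X_\mu(L)\to\Path X_{\mu+1}(L)\) is a summand inclusion, hence an h-cofibration. For \(R\) we apply Lemma~\ref{lem:R-step}, and for globes Proposition~\ref{prop:globe-step}, taking \(X_\mu\to X'_\mu\) as the weak equivalence. Both results also give that \(\Path X_\mu(L)\to\Path X_{\mu+1}(L)\) and its primed analogue are h-cofibrations.

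At a limit ordinal \(\mu\), the state sets are colimits of sets along maps that are eventually compatible bijections, so \(X^0_\mu\to X'^0_\mu\) is a bijection. The main obstacle is the path objects. Colimits of towers of \(\Pcat\)-flows are not computed naively, so we first check that along a tower whose steps are pushouts of generators, \(\Path X_\mu\) is the colimit of the \(\Path X_\nu\). This holds because filtered colimits of enriched semicategories are computed on paths once states are fixed, and \(\otimes\) commutes with filtered colimits in each variable. The one subtlety is that \(R\)-steps change the state set: we reindex \(\Path\) over the colimit state set, so that \(\Path X_\mu=\liminj_{\nu<\mu}\Path X_\nu\) as total \(\Pcat\)-spaces. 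We then evaluate at \(L\) and apply Corollary~\ref{cor:transfinite-gluing} to the towers \(\Path X_\nu(L)\) and \(\Path X'_\nu(L)\), whose successor maps are h-cofibrations by the successor step. This shows the map of total path spaces is a weak homotopy equivalence, and restricting to each open-closed summand indexed by a pair of states, as in Proposition~\ref{prop:globe-step}, gives a weak equivalence of \(\Pcat\)-flows.

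Finally, \(B'\) is a retract of \(X'_\lambda\) compatibly with \(B\to X_\lambda\), so \(B\to B'\) is a retract of the weak equivalence \(X_\lambda\to X'_\lambda\) and is therefore a weak equivalence. The hypothesis that \(\Pcat\) has cuts enters only through Lemma~\ref{lem:finite-tensor-coproduct}, which is used inside Proposition~\ref{prop:globe-step} and Lemma~\ref{lem:R-step}.
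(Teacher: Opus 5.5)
Your proof is correct and follows the paper's argument. You reduce to a tower of single-generator pushouts, base-change along \(s\), run the simultaneous transfinite induction via Lemma~\ref{lem:R-step}, Proposition~\ref{prop:globe-step} and Corollary~\ref{cor:transfinite-gluing}, and finish with the retract argument.

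The only divergence is at limit ordinals. There you justify \(\liminj_{\nu<\mu}\Path X_\nu\cong\Path X_\mu\) directly from the fact that filtered colimits of enriched semicategories are computed on path objects. This works because separate colimit preservation of the biclosed \(\otimes\), together with finality of the diagonal \(\mu\to\mu\times\mu\), gives joint commutation, and two states with the same image in the colimit already agree at some stage \(\nu<\mu\). The paper instead invokes relative-\(T_1\) inclusions and \cite[Theorem~6.14]{Moore1}.
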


\begin{proof}
First suppose that \(i: A\to X\) is a relative cell complex generated by the maps \eqref{eq:Pflow-generators}.  A pushout of a coproduct of generating maps may be refined, after well-ordering its cells, into a transfinite composite of pushouts of single generating maps: attach the cells one at a time using the composites of their original attaching maps, and use the universal property of coproducts and pushouts to identify the resulting colimit with the simultaneous pushout.  We may therefore write \(i\) as a transfinite tower indexed by \(\lambda+1\)
\[
 A=A_0\longrightarrow A_1\longrightarrow\cdots
 \longrightarrow A_\lambda=X
\]
in which each successor is a pushout of one map in \eqref{eq:Pflow-generators}.

Let \(s=s_0: A\to A'\) be a weak equivalence and base-change this tower along \(s\).  Thus
\[
 A'_\mu=A'\amalg_{A}A_\mu,
 \qquad s_\mu: A_\mu\longrightarrow A'_\mu,
\]
and \(A'_{\mu+1}\) is obtained from \(A'_\mu\) by the corresponding generating attachment.  We prove by transfinite induction that every \(s_\mu\) is a weak equivalence and, simultaneously, that every successor map
\begin{equation}\label{eq:successor-hcof}
 \Path A_\mu(L)\longrightarrow\Path A_{\mu+1}(L),
 \qquad
 \Path A'_\mu(L)\longrightarrow\Path A'_{\mu+1}(L)
\end{equation}
is an h-cofibration for all \(L\in\Obj(\Pcat)\).

The assertion at \(0\) is the hypothesis on \(s\).  At a successor stage there are three cases.
\begin{enumerate}
\item A pushout of \(C\) adds one isolated state and creates no path.  Thus
the maps in \eqref{eq:successor-hcof} are identities, and \(s_{\mu+1}\) is a
bijection on states with the same componentwise path maps as \(s_\mu\).
\item A pushout of \(R\) identifies two states and freely creates exactly the
reduced words of Lemma~\ref{lem:R-step}.  That lemma proves both the
induction assertion for \(s_{\mu+1}\) and the two h-cofibration assertions.
\item A pushout of a generating globe is covered by
Proposition~\ref{prop:globe-step}, which again proves both assertions.
\end{enumerate}

Let \(\mu\) be a limit ordinal and suppose that the induction is established below \(\mu\).  By \eqref{eq:successor-hcof} and \cite[Proposition~2.6]{leftproperflow}, both path towers consist objectwise of relative-\(T_1\) inclusions.  The path functor therefore commutes with their colimits:
\begin{equation}\label{eq:path-limit}
 \liminj_{\nu<\mu}\Path A_\nu\xrightarrow{\cong}\Path A_\mu,
 \qquad
 \liminj_{\nu<\mu}\Path A'_\nu\xrightarrow{\cong}\Path A'_\mu
\end{equation}
by \cite[Theorem~6.14]{Moore1}.  For every \(L\in\Obj(\Pcat)\), Corollary~\ref{cor:transfinite-gluing}, applied to the evaluated towers, shows that the map between the two colimits in \eqref{eq:path-limit} is a weak homotopy equivalence.  On states, the natural transformation below \(\mu\) is objectwise a bijection, hence a natural isomorphism of diagrams of sets; its colimit is again a bijection.  It follows that \(s_\mu\) is a weak equivalence.  This completes the induction.  In particular, the base change
\[
 \bar s=s_\lambda: X\longrightarrow
 A'\mathop{\coprod}_{A}X
\]
is a weak equivalence whenever \(i\) is a relative generating-cell complex.

Finally let \(i: A\to X\) be an arbitrary q-cofibration.  Apply the small object argument and the retract characterization of cofibrations \cite[Theorem~2.1.14 and Corollary~2.1.15]{MR99h:55031} to factor it as
\[
 A\xlongrightarrow{j}Y\xlongrightarrow{p}X,
\]
where \(j\) is a relative cell complex on \eqref{eq:Pflow-generators} and \(p\) is a trivial q-fibration.  Since \(i\) has the left lifting property with respect to \(p\), the square with top \(j\), left side \(i\), right side \(p\), and bottom \(\id_X\) has a lift \(r: X\to Y\).  Thus \(ri=j\) and \(pr=\id_X\), so \(i\) is a retract of \(j\) in the undercategory \(A\downarrow\PFlow\).  Pushout along \(s\) is a functor from this undercategory to \(A'\downarrow\PFlow\) and hence preserves that retract.  Consequently the induced pushout map for \(i\) is a retract, in the arrow category, of the induced pushout map for \(j\).  The latter is a weak equivalence by the cellular case.  A retract of a bijection is a bijection, and applying \(\pi_0\) and every based \(\pi_n\) shows that a retract of a weak homotopy equivalence is a weak homotopy equivalence.  Thus weak equivalences of \(\Pcat\)-flows are closed under retracts, so the pushout map for \(i\) is a weak equivalence.  This is exactly left properness.
\end{proof}

\section{Applications to interval reparametrization categories}
\label{sec:7}

The objects of \(\G\) and \(\M\) are the positive real numbers, with tensor product \(a\otimes b=a+b\).  Their enriched hom-spaces are
\begin{align*}
 \G(a,b)&=\{\text{nondecreasing homeomorphisms }[0,a]\to[0,b]\},\\
 \M(a,b)&=\{\text{nondecreasing continuous surjections }[0,a]\to[0,b]\}.
\end{align*}
They carry the \(\Delta\)-kelleyfication of the relative compact-open topology, and composition is composition of maps.  These data define reparametrization categories by \cite[Propositions~4.9 and~4.11]{Moore1}.  For \(\alpha\in\Pcat(a,a')\) and \(\beta\in\Pcat(b,b')\), where \(\Pcat\in\{\G,\M\}\), the \textit{block tensor} of maps is
\begin{equation}\label{eq:block}
 (\alpha\otimes\beta)(t)=
 \begin{cases}
  \alpha(t),&0\leq t\leq a,\\
  a'+\beta(t-a),&a\leq t\leq a+b.
 \end{cases}
\end{equation}
This is \cite[Definition~4.6]{Moore1}; its associativity and continuity
are part of \cite[Propositions~4.7, 4.9 and~4.11]{Moore1}.

We refer to \(\G\) and \(\M\) as the \textit{interval reparametrization categories}.

\begin{lem} \label{lem:inverse}
	The function 
	\[
	(-)^{-1}:\G(L,L') \to \G(L',L),\qquad f\longmapsto f^{-1}
	\]
	is continuous. 
\end{lem}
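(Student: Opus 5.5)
The plan is to reduce continuity of inversion on \(\G(L,L')\) to a statement about the compact-open topology on homeomorphism spaces of compact intervals, and then to pass through \(\Delta\)-kelleyfication. Recall that \(\G(L,L')\) carries the \(\Delta\)-kelleyfication of the relative compact-open topology. By the universal property of \(\Delta\)-kelleyfication, a map out of a \(\Delta\)-generated space is continuous as soon as its composite with every continuous map \(\Delta^n\to\G(L,L')\) is continuous. Moreover, \(\Delta\)-kelleyfication is functorial with respect to maps that are continuous for the underlying (non-kelleyfied) topologies. It therefore suffices to show that inversion is continuous for the ordinary compact-open topologies on the subspaces of \(C([0,L],[0,L'])\) and \(C([0,L'],[0,L])\). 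Since both intervals are compact metric spaces, the compact-open topology there coincides with the topology of uniform convergence, so the whole question becomes a metric one.

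The key step is then the classical argument that inversion is continuous for the uniform metric on increasing homeomorphisms of compact intervals. Fix \(f\in\G(L,L')\) and \(\epsilon>0\). Since \(f^{-1}\) is uniformly continuous on \([0,L']\), choose \(\eta>0\) such that \(|s-s'|<\eta\) implies \(|f^{-1}(s)-f^{-1}(s')|<\epsilon\).

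Now let \(g\in\G(L,L')\) satisfy \(\sup_t|g(t)-f(t)|<\eta\), and take \(s\in[0,L']\). Put \(t=g^{-1}(s)\). Then
\[
|f(t)-s| = |f(t)-g(t)| < \eta ,
\]
so
\[
|g^{-1}(s)-f^{-1}(s)| = |f^{-1}(f(t))-f^{-1}(s)| < \epsilon .
\]
This gives uniform \(\epsilon\)-closeness of the inverses, with \(\eta\) depending only on \(f\) and \(\epsilon\). Hence inversion is continuous in the uniform topology, and so in the compact-open topology.

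The only point I expect to need care is the passage through kelleyfication: one must make sure that the relative compact-open topology on \(\G(L,L')\) is the subspace topology from the metric uniform topology before kelleyfication. Then a continuous map between the underlying spaces induces a continuous map between their \(\Delta\)-kelleyfications, because composing with a continuous map from a simplex stays continuous. I would cite \cite[Section~2]{Moore1} for the description of \(\TOP\) and the relevant functoriality, which completes the proof.
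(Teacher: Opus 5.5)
Your proof is correct, and it differs from the paper only in that the paper gives no argument here: it cites \cite[Lemma~6.2]{Moore2}. Your proof is self-contained and has three steps:
\begin{enumerate}
\item On maps from a compact interval to a compact metric interval, the relative compact-open topology is the sup-metric topology.
\item Inversion is continuous for that metric, by the classical uniform-continuity argument: \(\|g-f\|_\infty<\eta\) forces \(|g^{-1}(s)-f^{-1}(s)|<\epsilon\) for every \(s\), where \(\eta\) is a modulus of uniform continuity of \(f^{-1}\) at \(\epsilon\).
\item Continuity passes to the \(\Delta\)-kelleyfied topologies by functoriality of \(\Delta\)-kelleyfication. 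A map that is continuous for the underlying topologies sends continuous maps from simplices to continuous maps from simplices.
\end{enumerate}
Step 2 does not use monotonicity. It is the standard proof that the homeomorphisms of a compact metric space form a topological group, here for homeomorphisms between two different intervals. Your route also proves slightly more than the lemma states, namely continuity already before kelleyfication. The citation keeps the paper short but sends the reader elsewhere for an elementary fact.

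You flagged the kelleyfication step as the delicate point, and you can close it completely. Suppose instead that \(\G(L,L')\) is taken as a subspace of \(\TOP([0,L],[0,L'])\) inside \(\Top\), rather than as the kelleyfication of the relative compact-open topology. Both descriptions admit exactly the same continuous maps from simplices, so they carry the same \(\Delta\)-generated topology. Your reduction is therefore independent of that convention.
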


\begin{proof}
	This is \cite[Lemma~6.2]{Moore2}.
\end{proof}

\begin{thm}\label{thm:GM-cuttable}
The interval reparametrization categories \(\G\) and \(\M\) have cuts.
\end{thm}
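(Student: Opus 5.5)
The plan is to write down explicit cut data for each category, using two different mechanisms. In both cases take \(\delta_L:L\to L\otimes L=2L\) to be the linear map \(t\mapsto 2t\), which lies in \(\G(L,2L)\subset\M(L,2L)\). For \(\M\), the cuts are obtained by clipping the \emph{values} of \(\psi\) at the level \(a\). For \(\G\), where clipping would destroy injectivity, the cuts are obtained by splitting the \emph{domain} of \(\psi\) at the unique point that \(\psi\) sends to \(a\).

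\emph{The category \(\M\).} For \(\psi\in\M(L,a+b)\) put
\[
c_1(\psi)=\min(\psi,a),\qquad c_2(\psi)=\max(\psi,a)-a .
\]
These are nondecreasing continuous surjections onto \([0,a]\) and \([0,b]\) respectively. Block equivariance follows directly from \eqref{eq:block}. Indeed, \((\alpha\otimes\beta)\) agrees with \(\alpha\) on \([0,a]\), takes values \(\geq a'\) on \([a,a+b]\), and satisfies \((\alpha\otimes\beta)(a+x)=a'+\beta(x)\). Hence
\[
\min((\alpha\otimes\beta)\psi,a')=\alpha\min(\psi,a),\qquad \max((\alpha\otimes\beta)\psi,a')-a'=\beta(\max(\psi,a)-a).
\]
For the homotopy, choose nondecreasing continuous surjections of \([0,L]\) by
\[
f_u(t)=\min((1+u)t,L),\qquad g_u(t)=\max((1+u)t-uL,0),
\]
so that \(f_0=g_0=\id\) and \(f_u\geq g_u\). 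Then set
\[
\Theta(u,\psi)(t)=\min(\psi(f_u t),a)+\max(\psi(g_u t),a)-a .
\]
The endpoint checks are as follows. At \(u=0\) one recovers \(\psi\), since \(\min(x,a)+\max(x,a)-a=x\). At \(u=1\), a case split at \(t=L/2\) gives exactly \(N^{a,b}_L(\psi)=(c_1\otimes c_2)\delta_L\).

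Each \(\Theta(u,\psi)\) lies in \(\M(L,a+b)\). It is nondecreasing and continuous, and it is surjective because its values run from \(0\) to \(a+b\). Equivariance \eqref{eq:Theta-equiv} uses \(f_u\geq g_u\) and a case split. If \(\psi(g_ut)\geq a\), then also \(\psi(f_ut)\geq a\), so the value is \(\max(\psi(g_u t),a)\geq a\) and \(\alpha\otimes\beta\) acts through \(\beta\). Otherwise the value is \(\min(\psi(f_ut),a)\leq a\) and it acts through \(\alpha\). In both cases this matches the clipped expression for \((\alpha\otimes\beta)\psi\).

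\emph{The category \(\G\).} For \(\psi\in\G(L,a+b)\) let \(s=s(\psi)=\psi^{-1}(a)\in(0,L)\). Then \(s((\alpha\otimes\beta)\psi)=s(\psi)\), since \((\alpha\otimes\beta)^{-1}(a')=a\). Define
\[
c_1(\psi)(t)=\psi(st/L),\qquad c_2(\psi)(t)=\psi\bigl(s+(L-s)t/L\bigr)-a .
\]
Equivariance is immediate from the invariance of \(s\) and from \eqref{eq:block}. A direct computation shows that \(N(\psi)=\psi\circ k_1\), where \(k_1\) is the piecewise linear homeomorphism of \([0,L]\) sending \(L/2\mapsto s\). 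Let \(k_u\) be the piecewise linear homeomorphism sending \((1-u)s+uL/2\mapsto s\), so that \(k_0=\id\). Set \(\Theta(u,\psi)=\psi\circ k_{u,s(\psi)}\). Equivariance again follows from the invariance of \(s\) together with \((\alpha\otimes\beta)(\psi\circ k)=((\alpha\otimes\beta)\psi)\circ k\).

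The main obstacle is continuity in the \(\Delta\)-kelleyfied compact-open topology. For \(\M\), the maps \((u,\psi)\mapsto\Theta(u,\psi)\) and the cuts are built from composition, which is continuous in \(\Delta\)-generated spaces, from postcomposition with fixed continuous functions such as \(\min(-,a)\), and from precomposition with the continuous families \(f_u,g_u\). For \(\G\), one must additionally check that \(\psi\mapsto s(\psi)=\psi^{-1}(a)\) is continuous. This is where Lemma~\ref{lem:inverse} enters: \(s\) is evaluation at \(a\) of the continuous map \(\psi\mapsto\psi^{-1}\), and evaluation is continuous. The families \((u,s)\mapsto k_{u,s}\) and the rescalings are jointly continuous as maps into \(\TOP([0,L],[0,L])\). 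Cartesian closedness of \(\Top\) then gives continuity of all the data.
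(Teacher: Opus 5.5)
Your proposal is correct. You use the same \(\delta_L\) and the same cut operators as the paper (your \(\max(\psi,a)-a\) is the paper's \(\max\{\psi-a,0\}\)), with the same equivariance check. The differences lie in the continuity argument and, above all, in the normalization homotopy.

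For continuity, the paper uses sequentiality of \(\Delta\)-generated spaces and the pointwise-convergence criterion of \cite[Proposition~3]{Moore3}. You instead use continuity of composition and evaluation in the cartesian closed category \(\Top\), together with Lemma~\ref{lem:inverse}. This works once you add, as the paper does for \(\Theta\), that a map from a \(\Delta\)-generated space landing in the relative subspace \(\Pcat(L,c)\subset\TOP([0,L],[0,c])\) is continuous for the \(\Delta\)-kelleyfied topology.

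For \(\Theta\), the paper gives one construction for both categories. It concatenates \(H_u(\psi)=\max\{\psi,N(\psi)(u\,\cdot)\}\) with the reverse of \(K_u(\psi)=\max\{N(\psi),\psi(u\,\cdot)\}\). It never needs an explicit formula for \(N\): equivariance follows formally from \(N(k\psi)=kN(\psi)\) and \(k\max=\max k\) for nondecreasing \(k\). Your homotopies are instead explicit and category-specific:
\begin{itemize}
\item For \(\G\), you identify \(N(\psi)=\psi\circ k_{1,s(\psi)}\) and deform only the reparametrization. Every stage is \(\psi\) precomposed with an element of \(\G(L,L)\), so membership in \(\G(L,a+b)\) is automatic and equivariance reduces to \(s((\alpha\otimes\beta)\psi)=s(\psi)\).
\item For \(\M\), your interpolation with \(f_u\geq g_u\) is handled by the case split you describe; both endpoint computations and both equivariance cases check out.
\end{itemize}
The paper's route is shorter and uniform, and it isolates what is actually used: equivariance of \(N\) and monotonicity of block tensors. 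Yours is a single stage with no concatenation and more geometric, but it has to be redone for each category.
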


\begin{proof}
Fix \(a,b,L>0\).  In both categories, choose
\begin{equation}\label{eq:GM-delta}
 \delta_L:[0,L]\longrightarrow[0,2L],\qquad
 \delta_L(t)=2t.
\end{equation}
This is a nondecreasing homeomorphism.  It therefore belongs to \(\G(L,2L)\), and, being surjective, to \(\M(L,2L)\) as well.

For \(\psi\in\Pcat(L,a+b)\), define the two cuts as follows.  If \(\Pcat=\G\), let \(\tau=\psi^{-1}(a)\) and put
\begin{align}
 c^{a,b}_1(\psi)(t)
  &=\psi\left(\frac{\tau t}{L}\right),                \label{eq:G-cut1}\\
 c^{a,b}_2(\psi)(t)
  &=\psi\left(\tau+\frac{(L-\tau)t}{L}\right)-a.     \label{eq:G-cut2}
\end{align}
The endpoint values and strict monotonicity show that these maps belong to \(\G(L,a)\) and \(\G(L,b)\), respectively.  If \(\Pcat=\M\), put
\begin{align}
 c^{a,b}_1(\psi)(t)&=\min\{\psi(t),a\},                \label{eq:M-cut1}\\
 c^{a,b}_2(\psi)(t)&=\max\{\psi(t)-a,0\}.             \label{eq:M-cut2}
\end{align}
These maps are continuous and nondecreasing.  Their endpoint values are, respectively, \((0,a)\) and \((0,b)\), so the intermediate value theorem makes them surjective.  They therefore belong to \(\M(L,a)\) and \(\M(L,b)\).

The topological spaces \(\Pcat(L,a+b)\), \(\Pcat(L,a)\), and \(\Pcat(L,b)\) are sequential because they are \(\Delta\)-generated. By \cite[Proposition~3]{Moore3}, convergence of sequences in each of these mapping spaces is equivalent to pointwise convergence. The formulas defining the four cut operators, together with Lemma~\ref{lem:inverse} for \(\G\) and the continuity of \(\min\) and \(\max\) for \(\M\), show that these operators preserve convergent sequences. They are therefore sequentially continuous and, since their domains are sequential, continuous.

We next verify block equivariance.  For \(\G\), if \(\chi=(\alpha\otimes\beta)\psi\), then the inverse image of \(a'\) under \(\alpha\otimes\beta\) is the singleton \(\{a\}\).  Therefore \(\chi^{-1}(a')=\psi^{-1}(a)\), and restricting \eqref{eq:block} to the two sides of this split point gives
\begin{align}
 c^{a',b'}_1((\alpha\otimes\beta)\psi)
   &=\alpha c^{a,b}_1(\psi),                          \label{eq:GM-equiv1}\\
 c^{a',b'}_2((\alpha\otimes\beta)\psi)
   &=\beta c^{a,b}_2(\psi).                          \label{eq:GM-equiv2}
\end{align}
For \(\M\), define
\[
 q^{a,b}_1(t)=\min\{t,a\},\qquad
 q^{a,b}_2(t)=\max\{t-a,0\}.
\]
The cuts are \(q_i^{a,b}\psi\).  A direct check on \([0,a]\) and \([a,a+b]\), using \eqref{eq:block} and the endpoint values of
\(\alpha,\beta\), gives
\[
 q^{a',b'}_1(\alpha\otimes\beta)=\alpha q^{a,b}_1,
 \qquad
 q^{a',b'}_2(\alpha\otimes\beta)=\beta q^{a,b}_2.
\]
Composition with \(\psi\) proves \eqref{eq:GM-equiv1}--\eqref{eq:GM-equiv2} for \(\M\) as well.

It remains to construct the normalization homotopy.  Let
\begin{equation}\label{eq:GM-N}
 N^{a,b}_L(\psi)=
 \bigl(c^{a,b}_1(\psi)\otimes c^{a,b}_2(\psi)\bigr)\delta_L.
\end{equation}
It is continuous by continuity of the cuts, composition, and the tensor of maps.  Equations \eqref{eq:GM-equiv1}--\eqref{eq:GM-equiv2} and functoriality of \(\otimes\) give, for \(k=\alpha\otimes\beta\),
\begin{equation}\label{eq:GM-N-equiv}
 N^{a',b'}_L(k\psi)=kN^{a,b}_L(\psi).
\end{equation}
For \(u\in[0,1]\), put
\begin{align}
 H_u(\psi)(t)
  &=\max\{\psi(t),N^{a,b}_L(\psi)(ut)\},             \label{eq:GM-H}\\
 K_u(\psi)(t)
  &=\max\{N^{a,b}_L(\psi)(t),\psi(ut)\}.            \label{eq:GM-K}
\end{align}
For \(\M\), these maps are continuous and nondecreasing, and their endpoint values are \(0,a+b\); the intermediate value theorem makes them surjective. Thus they belong to \(\M(L,a+b)\).

For \(\G\), they are homeomorphisms.  If \(u>0\), both functions entering either maximum are strictly increasing.  The maximum of two strictly increasing real functions is strictly increasing: if \(s<t\), whichever function realizes the maximum at \(s\) has a strictly larger value at \(t\), which is at most the maximum at \(t\).  If \(u=0\), then \(H_0(\psi)=\psi\) and \(K_0(\psi)=N^{a,b}_L(\psi)\).  In every case their endpoint values are \(0,a+b\), so they are homeomorphisms of intervals and belong to \(\G(L,a+b)\).

The two families are continuous.  Their adjoints are continuous by \eqref{eq:GM-H}--\eqref{eq:GM-K}, continuity of evaluation, of \((u,t)\mapsto ut\), and of the maximum map \(\mathbb R^2\to\mathbb R\). Cartesian closedness of \(\Top\) gives continuous maps into the internal mapping space.  Since the source is \(\Delta\)-generated and the images lie in the indicated relative subspaces, the universal property of \(\Delta\)-kelleyfication gives continuous maps into \(\Pcat(L,a+b)\); see \cite[Section~2]{Moore1}.

Every nondecreasing map \(k\) between intervals satisfies
\begin{equation}\label{eq:max-equiv}
 k(\max\{v,w\})=\max\{k(v),k(w)\}.
\end{equation}
Equations \eqref{eq:GM-N-equiv} and \eqref{eq:max-equiv} imply
\begin{equation}\label{eq:GM-HK-equiv}
 H_u(k\psi)=kH_u(\psi),\qquad K_u(k\psi)=kK_u(\psi).
\end{equation}
Moreover,
\[
 H_0(\psi)=\psi,\qquad
 H_1(\psi)=K_1(\psi)=\max\{\psi,N^{a,b}_L(\psi)\},\qquad
 K_0(\psi)=N^{a,b}_L(\psi).
\]
Define
\begin{equation}\label{eq:GM-Theta}
 \Theta^{a,b}_L(u,\psi)=
 \begin{cases}
  H_{2u}(\psi),&0\leq u\leq\frac12,\\
  K_{2-2u}(\psi),&\frac12\leq u\leq1.
 \end{cases}
\end{equation}
The two formulas agree at \(u=1/2\), so \(\Theta^{a,b}_L\) is continuous by the finite closed-cover pasting lemma.  For completeness, that lemma follows because the inverse image of a closed set under the pasted map is the union of its two inverse images, each closed in one member of the closed cover and hence closed in the whole source.  The displayed endpoint formulas show that \(\Theta^{a,b}_L\) joins the identity to \(N^{a,b}_L\), and \eqref{eq:GM-HK-equiv} gives its block equivariance.  Thus all the axioms of Definition~\ref{def:cuttable} hold for both \(\G\) and \(\M\).
\end{proof}

\begin{cor}\label{cor:GM-left-proper}
The q-model structures of \(\G\)-flows and \(\M\)-flows are left proper.
\end{cor}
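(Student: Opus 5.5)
This corollary is a direct combination of two results already established in the paper, so the plan is to chain them. First, Theorem~\ref{thm:GM-cuttable} shows that each of the interval reparametrization categories \(\G\) and \(\M\) carries a cut structure in the sense of Definition~\ref{def:cuttable}. The structure consists of \(\delta_L(t)=2t\), the explicit cut operators, and the block-equivariant normalization homotopy \(\Theta^{a,b}_L\). Consequently both \(\G\) and \(\M\) are reparametrization categories with cuts.

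Second, I would apply Theorem~\ref{thm:left-proper} with \(\Pcat=\G\) and then with \(\Pcat=\M\). That theorem asserts that the q-model structure on \(\PFlow\) is left proper for every reparametrization category with cuts. Before invoking it, I need to confirm that its standing hypotheses are met. Specifically, \(\G\) and \(\M\) must be reparametrization categories in the sense of Definition~\ref{def:reparametrization}, which is \cite[Propositions~4.9 and~4.11]{Moore1}. In addition, the q-model structure on \(\G\)-flows and on \(\M\)-flows must be the one recalled in Section~\ref{sec:3}, namely \cite[Theorems~8.8 and~8.16]{Moore1}, with generating cofibrations \eqref{eq:Pflow-generators}. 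No further input is needed.

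There is essentially no obstacle here; all the real content lies in the two cited theorems. The only point I would double-check is that the cut structure from Theorem~\ref{thm:GM-cuttable} satisfies \emph{every} axiom that the proof of Theorem~\ref{thm:left-proper} actually uses. That proof goes through Theorem~\ref{thm:common-comparison} and the tensor lemma, Theorem~\ref{thm:tensor}. These use exactly block equivariance \eqref{eq:cut-equiv1}--\eqref{eq:cut-equiv2}, the endpoint conditions \eqref{eq:Theta-ends}, and the equivariance \eqref{eq:Theta-equiv} of \(\Theta\). Each of these is verified at the end of the proof of Theorem~\ref{thm:GM-cuttable}.
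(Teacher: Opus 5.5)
Your proposal is correct and matches the paper's proof, which simply combines Theorem~\ref{thm:GM-cuttable} with Theorem~\ref{thm:left-proper}. Your extra check that the cut structure satisfies every axiom used by Theorem~\ref{thm:common-comparison} and the tensor lemma is harmless but already built into the hypothesis ``reparametrization category with cuts.''
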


\begin{proof}
Combine Theorem~\ref{thm:GM-cuttable} with Theorem~\ref{thm:left-proper}.
\end{proof}

\section{The case of the final category}
\label{sec:8}

Let \(\One\) be the category with one object \(*\) and one morphism \(\id_*\).  Its enriched hom-space is the one-point space, and its unique strict semimonoidal structure is given by \(*\otimes*=*\) and \(\id_*\otimes\id_*=\id_*\).  It is final because every \(\Top\)-enriched category admits a unique enriched functor to \(\One\).

\begin{proposition}\label{prop:one-cuttable}
The final category \(\One\) is a reparametrization category with cuts.
\end{proposition}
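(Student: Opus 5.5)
The proof splits into two parts: first check that \(\One\) satisfies Definition~\ref{def:reparametrization}, then exhibit a cut structure in the sense of Definition~\ref{def:cuttable}. Every hom-space of \(\One\) is a one-point space, so every required identity between morphisms holds automatically. The proof therefore consists of choosing the data and observing that each equation has both sides equal to \(\id_*\).

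For the reparametrization axioms, take the strict semimonoidal structure already described. It is strict because \((*\otimes*)\otimes*=*=*\otimes(*\otimes*)\) on objects and likewise on morphisms. The one-point space is \(\Delta\)-generated and \(\Delta\)-Hausdorff, so \(\One\) is a small \(\Top\)-enriched category for either choice of \(\Top\). Composition and the block tensor are continuous, since they are maps between one-point spaces. The hom-space \(\Pcat(*,*)=\{\id_*\}\) is contractible. For the factorization axiom~(3), given \(\phi=\id_*\colon *\to *=*\otimes *\), take \(\ell_1=\ell_2=*\) and \(\phi_1=\phi_2=\id_*\); then \(\phi=\phi_1\otimes\phi_2\).

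For the cut structure, set \(\delta_*=\id_*\colon *\to *\otimes *=*\). Let \(c_1=c_2\) be the unique map \(\{\id_*\}\to\{\id_*\}\), and let \(\Theta(u,\psi)=\id_*\) be the constant homotopy \([0,1]\times\{\id_*\}\to\{\id_*\}\). All of these maps are continuous because their targets are points. Block equivariance \eqref{eq:cut-equiv1}--\eqref{eq:cut-equiv2} and \eqref{eq:Theta-equiv} hold because both sides are elements of a one-point space. For the same reason \(N(\psi)=(\id_*\otimes\id_*)\id_*=\id_*=\psi\), so \eqref{eq:Theta-ends} holds.

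There is no genuine obstacle. The only point that needs care is making sure the enriched structure is well defined in the chosen \(\Top\), that is, that a point is \(\Delta\)-generated and \(\Delta\)-Hausdorff, and that nothing else in the definitions requires more than uniqueness of morphisms.
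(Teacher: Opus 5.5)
Your proposal is correct and follows essentially the same route as the paper: you verify strictness, contractibility of the one-point hom-space, and factorization via \(\id_*=\id_*\otimes\id_*\), then take all cut data to be \(\id_*\) with the constant homotopy, every required identity holding because all hom-spaces are points. Your extra check that a point is \(\Delta\)-generated and \(\Delta\)-Hausdorff, so that \(\One\) is enriched over either choice of \(\Top\), is a harmless detail the paper leaves implicit.
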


\begin{proof}
The semimonoidal structure is strict, the unique hom-space is contractible, and the tensor-factorization axiom has the unique solution \(\id_*=\id_*\otimes\id_*\).  Hence \(\One\) is a reparametrization category. There is only one possible choice for \(\delta_*\) and for each cut operator; all are \(\id_*\).  The normalization \(N^{*,*}_*\) is the identity of the one-point hom-space, and the constant homotopy is block-equivariant.  These data satisfy Definition~\ref{def:cuttable}.
\end{proof}

An enriched presheaf on \(\One\) is just a space.  Formula \eqref{eq:coend} has one summand and only identity relations, so it gives the natural homeomorphism
\[
 (D\otimes E)(*)\cong D(*)\times E(*).
\]
Consequently, a \(\One\)-flow is precisely a small topologically enriched semicategory, i.e., an ordinary flow.  This identification is also \cite[Definition~17]{Moore3}.

\begin{cor} \label{cor:one-left-proper}
The q-model structure of ordinary flows is left proper.
\end{cor}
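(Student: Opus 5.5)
The corollary follows by specializing Theorem~\ref{thm:left-proper} to \(\Pcat=\One\). By Proposition~\ref{prop:one-cuttable}, \(\One\) is a reparametrization category with cuts, so the q-model structure on \(\One\mathop{\mathsf{Flow}}\) is left proper. It remains to identify this model category with the q-model structure of ordinary flows. The identification must cover the underlying categories, the weak equivalences, and the cofibrations. Left properness involves only pushouts, cofibrations and weak equivalences, so this suffices.

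\emph{Categories.} An enriched presheaf on \(\One\) is a single space \(D(*)\): the hom-space is a point, so the action is trivial. Thus \(\PSp\cong\Top\) for \(\Pcat=\One\). By the computation just before the corollary, the coend \eqref{eq:coend} reduces to the cartesian product. Hence \((\PSp,\otimes)\) is isomorphic, as a semimonoidal category, to \((\Top,\times)\). It follows that \(\One\)-flows and their morphisms are exactly small topologically enriched semicategories, that is, ordinary flows; compare \cite[Definition~17]{Moore3}. This gives an isomorphism of categories, which in particular preserves pushouts.

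\emph{Model structures.} Weak equivalences of \(\One\)-flows are maps bijective on states and objectwise weak homotopy equivalences on path spaces. Since there is one object, these are exactly the weak equivalences of the q-model structure of ordinary flows. For cofibrations I will compare generators. The free \(\One\)-space \(\Ffree_*U=\One(-,*)\times U\) is just \(U\). The generators \eqref{eq:Pflow-generators} therefore become \(C\), \(R\), and \(\operatorname{Glob}(\mathsf{S}^{n-1})\to\operatorname{Glob}(\mathsf{D}^n)\), which are the standard generating q-cofibrations of ordinary flows.

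The isomorphism of categories identifies the two cofibrantly generated structures: they have the same generating cofibrations and the same weak equivalences, so they have the same cofibrations as well. The only point needing care, rather than a real obstacle, is this matching of the cited q-model structure on \(\One\)-flows \cite{Moore1} with the classical one. It is settled by the generator comparison, and the same identification is \cite[Definition~17]{Moore3}. Left properness then transfers along the isomorphism, recovering \cite[Theorem~5.6]{leftproperflow}.
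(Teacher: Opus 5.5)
Your proposal is correct and follows the paper's route: it specializes Theorem~\ref{thm:left-proper} to \(\One\) via Proposition~\ref{prop:one-cuttable}, using the identification of \(\One\)-flows with ordinary flows. You also check explicitly that this identification matches the weak equivalences and the generating cofibrations, which the paper leaves implicit in the phrase ``under the preceding identification.''
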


\begin{proof}
Under the preceding identification, apply Proposition~\ref{prop:one-cuttable} and Theorem~\ref{thm:left-proper}.  This recovers \cite[Theorem~5.6]{leftproperflow}.
\end{proof}

\section{A symmetric strictly unital non-final example}
\label{sec:9}

Here \emph{unital} refers to the semimonoidal tensor product, not to the identity morphisms of the underlying category. Since the tensor product in a reparametrization category is strictly associative, a unit for a \(\Top\)-enriched semimonoidal category \(\mathcal C\) would consist of an object \(e\) and enriched natural isomorphisms
\[
\lambda_X: e\otimes X\xrightarrow{\cong}X,
\qquad
\rho_X: X\otimes e\xrightarrow{\cong}X
\]
satisfying the monoidal unit coherence; in particular, \(\id_X\otimes\lambda_Y=\rho_X\otimes\id_Y\) after using strict associativity to identify the two sources.

We first establish that the interval reparametrization categories \(\G\) and \(\M\) are nonunital. The absence of a strict unit is already visible on objects: a strict unit \(e>0\) would have to satisfy \(e+L=L\).  This observation alone would not exclude a non-strict monoidal unit, because any two objects \(a,b>0\) are isomorphic in both categories.  Indeed, the linear homeomorphism \(t\mapsto bt/a\) belongs to \(\G(a,b)\), its inverse is the corresponding linear homeomorphism from \([0,b]\) to \([0,a]\), and both maps also belong to \(\M\).

\begin{proposition}\label{prop:GM-nonunital}
	Neither \(\G\) nor \(\M\) is unital.  More strongly, for \(\Pcat\in\{\G,\M\}\) there do not exist an object \(e\) and an ordinary natural isomorphism
	\[
	\lambda_L: e\otimes L\xrightarrow{\cong}L
	\qquad(L\in\Obj(\Pcat)).
	\]
\end{proposition}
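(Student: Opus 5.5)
The plan is to argue by contradiction. Suppose an object \(e>0\) and a natural isomorphism \(\lambda_L: e\otimes L\to L\) exist. Naturality of \(\lambda\) is tested against every morphism \(\phi\in\Pcat(L,L')\):
\[
\lambda_{L'}\circ(\id_e\otimes\phi)=\phi\circ\lambda_L .
\]
The idea is to take \(L'=L\) and use the block tensor \eqref{eq:block} to show this cannot hold for all \(\phi\). Here \(\id_e\otimes\phi\) fixes \([0,e]\) pointwise and acts as \(e+\phi(t-e)\) on \([e,e+L]\). The isomorphism \(\lambda_L\in\Pcat(e+L,L)\) is in either case a homeomorphism; in \(\M\), an isomorphism is a nondecreasing continuous surjection with a nondecreasing continuous surjective inverse, hence bijective. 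It is therefore a strictly increasing homeomorphism \([0,e+L]\to[0,L]\).

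The key step is to restrict the naturality identity to the subinterval \([0,e]\). There it reads \(\lambda_L(t)=\phi(\lambda_L(t))\) for all \(t\in[0,e]\) and all \(\phi\in\Pcat(L,L)\). Set \(c=\lambda_L(e)\). Since \(\lambda_L\) is strictly increasing with \(\lambda_L(0)=0\) and \(e>0\), we have \(0<c<L\); the upper bound holds because \(\lambda_L(e+L)=L\) and \(L>0\). The identity then says that every \(\phi\in\Pcat(L,L)\) fixes \(c\), indeed fixes all of \([0,c]\).

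To reach a contradiction, it suffices to exhibit one \(\phi\in\G(L,L)\subset\M(L,L)\) with \(\phi(c)\neq c\). A piecewise linear homeomorphism of \([0,L]\) fixing \(0\) and \(L\) and sending \(c\) to \(c/2\) will do; for example, take the map linear on \([0,c]\) and \([c,L]\). It is strictly increasing and surjective, so it lies in both categories. This contradicts the naturality identity, so no such \(e,\lambda\) exist. In particular there is no unit, since a unit would provide such a \(\lambda\).

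The only delicate point is justifying that \(\lambda_L\) is strictly increasing, and hence that \(0<c<L\), in \(\M\). This needs the argument that isomorphisms in \(\M\) are bijective nondecreasing maps, which follows because \(\lambda_L^{-1}\lambda_L=\id\) forces injectivity. Otherwise the argument is a direct evaluation of the block tensor formula, and it uses naturality at a single object \(L\).
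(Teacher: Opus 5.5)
Your proof is correct and follows the paper's argument. Both show that \(\lambda_L\) is a strictly increasing homeomorphism, including the injectivity argument for \(\M\), set \(c=\lambda_L(e)\in(0,L)\), and evaluate the naturality square for an endomorphism of \(L\) at the point \(e\), where \(\id_e\otimes\phi\) acts as the identity; the only difference is cosmetic, since the paper uses the fixed map \(\phi_L(t)=t^2/L\), which moves every interior point, rather than your piecewise linear map sending \(c\) to \(c/2\).
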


\begin{proof}
	Suppose that such an object \(e\) and such a natural isomorphism \(\lambda\) exist.  By definition of \(\G\) and \(\M\), \(e\) is a positive real number and \(e\otimes L=e+L\) for every \(L>0\).
	
	Fix \(L>0\).  The component \(\lambda_L\) is represented by a strictly increasing homeomorphism
	\[
	\lambda_L:[0,e+L]\longrightarrow[0,L].
	\]
	For \(\Pcat=\G\), this follows directly from the definition of its hom-spaces.  For \(\Pcat=\M\), the inverse of the isomorphism \(\lambda_L\) is a morphism whose composite with \(\lambda_L\) in both orders is an identity; hence the underlying continuous map of \(\lambda_L\) is bijective.  A continuous bijection from a compact interval to a Hausdorff interval is a homeomorphism: it is closed because the image of every closed, hence compact, subset is compact and therefore closed.  Finally, an injective nondecreasing map of intervals is strictly increasing.  This proves the claim in both cases.
	
	Put
	\[
	a=\lambda_L(e).
	\]
	Since \(0<e<e+L\) and \(\lambda_L\) is strictly increasing with endpoint values \(\lambda_L(0)=0\) and \(\lambda_L(e+L)=L\), we have
	\begin{equation}\label{eq:unitor-interior-point}
		0<a<L.
	\end{equation}
	Consider the endomorphism
	\[
	\phi_L:[0,L]\longrightarrow[0,L],
	\qquad
	\phi_L(t)=\frac{t^2}{L}.
	\]
	It is continuous and strictly increasing, has endpoint values \(0\) and \(L\), and has the continuous inverse \(t\mapsto\sqrt{Lt}\).  Thus \(\phi_L\in\G(L,L)\); being also a continuous nondecreasing surjection, it belongs to \(\M(L,L)\) as well.
	
	Naturality of \(\lambda\) with respect to \(\phi_L\) gives the commutative square
	\[
	\begin{tikzcd}
		e+L \arrow[r,"\id_e\otimes\phi_L"]
		\arrow[d,"\lambda_L"']
		& e+L \arrow[d,"\lambda_L"] \\
		L \arrow[r,"\phi_L"'] & L .
	\end{tikzcd}
	\]
	By the block formula \eqref{eq:block}, \((\id_e\otimes\phi_L)(e)=e\).  Evaluation of the commutative square at \(e\) therefore yields
	\[
	\phi_L(a)=\phi_L(\lambda_L(e))
	=\lambda_L((\id_e\otimes\phi_L)(e))
	=\lambda_L(e)=a.
	\]
	On the other hand, \eqref{eq:unitor-interior-point} gives
	\[
	\phi_L(a)=\frac{a^2}{L}<a,
	\]
	a contradiction.  Hence no natural left unitor exists.  A fortiori, neither \(\G\) nor \(\M\) admits a monoidal unit.
\end{proof}

The final category is an example of a symmetric strictly unital reparametrization category with cuts. We now exhibit an example of a symmetric strictly unital reparametrization category with cuts which is not equivalent to the final category. Let \(\Pmult\) have one object \(*\) and enriched endomorphism space
\[
\Pmult(*,*)=[0,1].
\]
The identity is \(1\), and composition is ordinary multiplication:
\[
t\circ s=ts \qquad (s,t\in[0,1]).
\]
This is a small \(\Top\)-enriched category.  Indeed, multiplication is a continuous map \([0,1]^2\to[0,1]\), it is associative, and \(1\) is its two-sided identity.

\begin{thm}\label{thm:example}
	There is a strict symmetric monoidal structure and a cut structure on \(\Pmult\) for which \(\Pmult\) is a symmetric reparametrization category with cuts.  Moreover, \(\Pmult\) is not equivalent to \(\One\).
\end{thm}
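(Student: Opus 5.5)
Our plan is to take the tensor on \(\Pmult\) to be multiplication as well: on objects \(*\otimes*=*\), and on morphisms \(s\otimes t=st\). Strict associativity and the interchange law \((s\otimes t)\circ(s'\otimes t')=(ss')\otimes(tt')\) follow from the associativity and commutativity of multiplication in \([0,1]\). The unit object is \(*\), and the unitors and symmetry are all the identity \(1\). This is legitimate because \(1\otimes t=t\) and because \(st=ts\) makes the identity symmetry natural. The hexagon and unit coherences are then trivial, since every structure map is the identity. Continuity of \(\otimes:[0,1]^2\to[0,1]\) is continuity of multiplication, and \([0,1]\) is \(\Delta\)-generated, being locally path-connected and metrizable, as in the proof of Proposition~\ref{prop:Acat}.

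Next we check the axioms of Definition~\ref{def:reparametrization}. Strictness is immediate. \([0,1]\) is contractible. For factorization, given \(\phi\in[0,1]\), we write \(\phi=\phi\otimes 1\).

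For the cut structure we plan to use \(\delta_*=1\) and the constant cut operators \(c_1(\psi)=c_2(\psi)=1\)? That choice fails block equivariance: we would need \(c_1((\alpha\otimes\beta)\psi)=\alpha c_1(\psi)\), i.e.\ \(1=\alpha\). Instead we take \(\delta_*=1\) and \(c_1(\psi)=c_2(\psi)=0\). Block equivariance then reads \(0=\alpha\cdot 0\) and \(0=\beta\cdot 0\), which holds. The normalization becomes \(N(\psi)=(0\otimes 0)\cdot 1=0\). For the homotopy we take \(\Theta(u,\psi)=(1-u)\psi\). It is continuous with values in \([0,1]\), satisfies \(\Theta(0,\psi)=\psi\) and \(\Theta(1,\psi)=0=N(\psi)\), and is equivariant because \(\Theta(u,\alpha\beta\psi)=(1-u)\alpha\beta\psi=\alpha\beta\,\Theta(u,\psi)\). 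This is the step where the nontrivial choice lies: the absorbing element \(0\) plays the role of a cut, and we must check that all of \eqref{eq:cut-equiv1}, \eqref{eq:cut-equiv2}, \eqref{eq:Theta-ends}, and \eqref{eq:Theta-equiv} hold. In Proposition~\ref{prop:Acat} exactly this failed, because \(\mathbb R\) under addition has no absorbing element.

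Finally, we show that \(\Pmult\) is not equivalent to \(\One\) as an ordinary category. An equivalence would be fully faithful, so it would induce a bijection \(\Pmult(*,*)\to\One(*,*)\). This is impossible because \([0,1]\) has more than one element. Equivalently, \(0\) is a non-invertible endomorphism, whereas every morphism of a category equivalent to \(\One\) is an isomorphism.
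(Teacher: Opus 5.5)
Your proposal is correct and follows essentially the same route as the paper: multiplication as the tensor, with strict unit \(*\) and identity symmetry. The cut structure is the same one, with \(\delta_*=1\), constant cuts equal to \(0\), normalization \(0\), and homotopy \(\Theta(u,\psi)=(1-u)\psi\); non-equivalence follows, as in the paper, because the functor to \(\One\) is not faithful on \(\{0,1\}\subset[0,1]\), and your alternative argument via the non-invertible endomorphism \(0\) is also valid.
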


\begin{proof}
	There is only one possible tensor product on objects, namely \(*\otimes *=*\).  On morphisms define
	\begin{equation}\label{eq:tensor-multiplication}
		s\otimes t=st.
	\end{equation}
	It is continuous.  It preserves the identity since \(1\otimes1=1\), and it is compatible with composition: for \(s_1,s_2,t_1,t_2\in[0,1]\),
	\[
	(s_2s_1)\otimes(t_2t_1)
	=s_2s_1t_2t_1
	=s_2t_2s_1t_1
	=(s_2\otimes t_2)(s_1\otimes t_1).
	\]
	The middle equality uses commutativity of multiplication.  Thus \(\otimes\) is an enriched bifunctor.  Its strict associativity follows from \((rs)t=r(st)\), so \(\Pmult\) is a strict enriched semimonoidal category.
	
	The unique object \(\ast\) is a strict tensor unit.  Indeed, for every
	\(s\in[0,1]\),
	\[
	1\otimes s=s=s\otimes1.
	\]
	Hence both unitors are identity enriched natural transformations, and all unit coherence identities hold strictly.
	
	The only hom-space \([0,1]\) is contractible: the map
	\[
	H:[0,1]\times[0,1]\longrightarrow[0,1],
	\qquad H(u,s)=(1-u)s+u
	\]
	is a homotopy from the identity to the constant map with value \(1\). It remains to check the factorization axiom.  Both the source and the target have only the decomposition \(*=*\otimes *\).  For any
	\(\phi\in\Pmult(*,*)\), take \(\phi_1=\phi\) and \(\phi_2=1\).  Then
	\[
	\phi_1\otimes\phi_2=\phi\cdot1=\phi.
	\]
	Hence \(\Pmult\) is a reparametrization category.
	
	Define the symmetry by \(\gamma_{*,*}=1\).  For \(s,t\in[0,1]\), its naturality condition is
	\[
	\gamma_{*,*}(s\otimes t)=(t\otimes s)\gamma_{*,*},
	\]
	which reduces to \(st=ts\). The two hexagon identities expressing compatibility of the braiding with the associator \cite[Definition~8.1.1]{Tensor_categories}, as well as the symmetry identity \(\gamma_{\ast,\ast}^2=\id_{\ast}\)\cite[Definition~8.1.12]{Tensor_categories}, reduce to \(1=1\).  Therefore this symmetry satisfies all the axioms recalled above.
	
	We construct a cut structure.  Put
	\[
	\delta_*=1,
	\qquad c^{*,*}_1(\psi)=0,
	\qquad c^{*,*}_2(\psi)=0
	\quad(\psi\in[0,1]).
	\]
	The two cut operators are continuous because they are constant.  If \(\alpha,\beta,\psi\in[0,1]\), then, using \eqref{eq:tensor-multiplication} and composition by multiplication,
	\[
	c^{*,*}_1((\alpha\otimes\beta)\psi)=0
	=\alpha 0=\alpha c^{*,*}_1(\psi),
	\qquad
	c^{*,*}_2((\alpha\otimes\beta)\psi)=0
	=\beta 0=\beta c^{*,*}_2(\psi).
	\]
	Thus the block-equivariance identities \eqref{eq:cut-equiv1}--\eqref{eq:cut-equiv2} hold.  The normalization is
	\[
	N^{*,*}_*(\psi)=(0\otimes0)\delta_*=0.
	\]
	Define
	\begin{equation}\label{eq:theta-example}
		\Theta^{*,*}_*(u,\psi)=(1-u)\psi.
	\end{equation}
	This map is continuous, and its values at \(u=0\) and \(u=1\) are respectively \(\psi\) and \(0=N^{*,*}_*(\psi)\).  Finally,
	\begin{align*}
		\Theta^{*,*}_*(u,(\alpha\otimes\beta)\psi)
		&=(1-u)\alpha\beta\psi \\
		&=\alpha\beta(1-u)\psi \\
		&=(\alpha\otimes\beta)\Theta^{*,*}_*(u,\psi),
	\end{align*}
	so \eqref{eq:Theta-equiv} holds.  This proves that \(\Pmult\) has cuts.
	
	It remains to prove the asserted non-equivalence.  Any equivalence of ordinary categories is faithful.  The underlying ordinary category of \(\Pmult\) has the distinct endomorphisms \(0\) and \(1\), whereas the underlying category of \(\One\) has only one endomorphism.  The unique functor \(\Pmult\to\One\) therefore sends \(0\) and \(1\) to the same arrow and is not faithful; consequently it is not an equivalence.  An enriched or a symmetric semimonoidal equivalence would in particular induce an equivalence of the underlying ordinary categories.  Hence no such equivalence between \(\Pmult\) and \(\One\) exists.
\end{proof}


\end{document}